\documentclass[12pt]{article}
\usepackage{enumerate}
\usepackage{amsmath,amssymb,amsthm, mathrsfs, mathtools}
\usepackage{latexsym}
\usepackage{graphics,graphicx}
\usepackage{hyperref}
\usepackage[bf, small]{titlesec}
\usepackage{authblk} %%% new line between authors
\usepackage{soul}
\usepackage{kotex}
\usepackage{lineno}
%\linenumbers

%%%%%%%%%%%%%%%%%%%%%%%%%%%%%%%%%%%%%%%%%%%%%%%%%%%%%%%%%%%%%%%%%%%%%%%%%%%%%%%%%%%

\setlength{\topmargin}{-0.5cm}
\setlength{\oddsidemargin}{0.2cm}
\setlength{\evensidemargin}{0.2cm}
\setlength{\textheight}{21cm}
\setlength{\textwidth}{16cm}
\setlength{\footskip}{2cm}
\setlength{\columnsep}{1cm}

%%%%%%%%%%%%%%%%%%%%%%%%%%%%%%%%%%%%%%%%%%%%%%%%%%%%%%%%%%%%%%%%%%%%%%%%%%%%%%%%%%%

\theoremstyle{plain}
\newtheorem{Thm}{Theorem}[section]

\newtheorem{Lem}[Thm]{Lemma}
\newtheorem{Prop}[Thm]{Proposition}
\newtheorem{Cor}[Thm]{Corollary}

\theoremstyle{definition}

\usepackage{standalone}
\usepackage{tikz}%pgfpages,,pgfkeys,pgfplots
\usetikzlibrary{arrows,positioning,matrix,fit,backgrounds,shapes,shapes.geometric, intersections}

\usetikzlibrary{shapes.callouts,decorations.pathmorphing} %%% 팝업용
\usepackage{calc} % calculate angles to evenly space vertices in circular arrangements.
\usetikzlibrary{decorations.markings} %%put arrowheads in the middle of directed edges
\tikzstyle{vertex}=[circle, draw, inner sep=0pt, minimum size=6pt] % style

 %the calculations of angles for evenly spacing vertices in circular arrangements.
%\tikzset{node distance=2cm, auto} %%% Hesse Diagram
\usetikzlibrary{arrows,matrix} %%% Hesse Diagram
\parskip=0.5ex

%%%%%%%%%%%%%%%%%%%%%%%%%%%%%%%%%%%%%%%%%%%%%%%%%%%%%%%%%%%%%%%%%%%%%%%%%%%%%%%%%%%

% 꼬부랑 글씨 단축명령
 %%%%% mathbb
 %%%%%
 %%%%%
 %%%%%
 %%%%%

 %%%%% mathcal
 %%%%%
 %%%%%
 %%%%%
 %%%%%
 %%%%%
 %%%%%
 %%%%%
 %%%%%
 %%%%%
 %%%%%
 %%%%%
 %%%%%
 %%%%%
 %%%%%

\newtheorem*{claim1}{Claim A}

\newcommand{\olr}[1]{\overleftrightarrow{#1}}

%%%%%
\title{Digraphs in which every $t$ vertices share exactly $\lambda$ out-neighbors and exactly $\lambda$ in-neighbors}

\author[1]{Hojin Chu}
\author[1]{Suh-Ryung Kim}

\affil[1]{\footnotesize Department of Mathematics Education, Seoul National University, Seoul 08826, Rep. of Korea}
\affil[ ]{\footnotesize\textit{ghwls8775@snu.ac.kr, srkim@snu.ac.kr}}
\date{}

\definecolor{LemonChiffon}{rgb}{100, 98, 80}
\definecolor{myblue}{rgb}{0,0.4,0.8}
\definecolor{orange}{rgb}{1, 0.4, 0}
\definecolor{mygreen}{rgb}{0, 0.8, 0}
\definecolor{myred}{rgb}{204, 0, 0}
\definecolor{violet}{RGB}{0.4,0.2,1}
\definecolor{brown}{rgb}{0.6, 0.4, 0}

\newcounter{statement}

% \newcounter{statement}
%\newcommand{\statement}[2]{%
% \begin{equation}\refstepcounter{statement}\tag{S\thestatement}\label{#1}% 
%  \parbox{\dimexpr\linewidth-4em}{#2}%
% \end{equation}%
%}

\begin{document}
\maketitle

 \begin{abstract}
 In this paper, we introduce the notion of two-way $(t,\lambda)$-liking digraphs as a way to extend the results for generalized friendship graphs. 
 A two-way $(t,\lambda)$-liking digraph is a digraph in which every $t$ vertices have exactly $\lambda$ common out-neighbors and $\lambda$ common in-neighbors. 
 We first show that if $\lambda \ge 2$, then a two-way $(2,\lambda)$-liking digraph of order $n$ is $k$-diregular for a positive integer $k$ satisfying the equation $(n-1)\lambda=k(k-1)$.
 This result is comparable to the result by Bose and Shrikhande in 1969 and actually extends it. 
Another main result is that if $t \ge 3$, then the complete digraph on $t+\lambda$ vertices is the only two-way $(t,\lambda)$-liking digraph.
 This result can stand up to the result by Carstens and Kruse in 1977 and essentially extends it.
 In addition, we find that two-way $(t, \lambda)$-liking digraphs are closely linked to symmetric block designs and extend some existing results of $(t, \lambda)$-liking digraphs.
  \end{abstract}

    \noindent
{\it Keywords.} Two-way Liking digraph; Liking digraph; Generalized Friendship graph; Diregular digraph; Symmetric Block Design.

\noindent
{{{\it 2020 Mathematics Subject Classification.} 05C20, 05C75}}

\section{Introduction}
In this paper, for graph-theoretical terminology and notations not defined, we follow \cite{bang2018classes} and \cite{bondy2010graph}.
Neither graphs nor digraphs in this paper have loops, multiple edges, or multiple arcs.
In 1966, Erd\"{o}s {\it et al.} \cite{erd1966r} introduced and proved the Friendship Theorem.
The Friendship Theorem can be stated, in graph-theoretical terms, as follows: if any pair of vertices in a graph has exactly one common neighbor, then there exists a vertex adjacent to the others.
The graph described in the Friendship Theorem is referred to as a ``friendship graph".
A {\it friendship graph} is a graph in which every pair of vertices has exactly one common neighbor. 
Many  variants  of friendship graphs  have  been  studied. 
As one of those variants, a {\it generalized friendship graph} is a graph in which every $t$ vertices have exactly $\lambda$ common neighbors for some positive integers $t$ and $\lambda$. 
In \cite{bose1969graphs}, it was shown that a generalized friendship graph is regular if $t=2$ and $\lambda \ge 2$.
In \cite{CARSTENS1977286}, it was shown that a generalized friendship graph is isomorphic to the complete graph on $t+\lambda$ vertices if $t\ge 3$. 
In \cite{DELORME1984261}, an infinite generalized friendship graph is studied.  

There is also a variation of the Friendship Theorem in terms of digraph.
A digraph is called a
{\it $(t,\lambda)$-liking digraph} if every $t$ vertices have exactly $\lambda$ common out-neighbors where $t$ and $\lambda$ are positive integers.
In 1974,
 M\"{u}ller and Pelant \cite{muller1974strongly} studied the case where tournaments are $(t,\lambda)$-liking digraphs and showed the non-existence of such tournaments for $t\geq 3$.
In 1975, Plesn\'{i}k \cite{plesnik1975graphs} characterized the $(t,1)$-liking digraphs for each integer $t\ge 3$ by proving that every $(t,1)$-liking digraph with $t\ge 3$ is the complete digraph on $t+1$ vertices.
Recently, Choi {\it et al.} \cite{choi2023digraph} completely characterized $(2,1)$-liking digraphs.
In the follow-up paper \cite{choi2024generalizedlikingdigraph}, the authors extended their results and studied the case where $(t,\lambda)$-liking digraphs are complete.
 
While seeking a way to extend the results for generalized friendship graphs in \cite{bose1969graphs}, \cite{CARSTENS1977286}, and \cite{sudolsky1978generalization} (mentioned above) into a digraph version, we were led to introduce the notion of ``two-way $(t,\lambda)$-liking digraphs".
A digraph is called a
{\it two-way $(t,\lambda)$-liking digraph} if every $t$ vertices have exactly $\lambda$ common out-neighbors and exactly $\lambda$ common in-neighbors where $t$ and $\lambda$ are positive integers.
Note that the digraph obtained from a generalized friendship graph by replacing each edge with a directed cycle of length two is a two-way $(t,\lambda)$-liking digraph.
Accordingly, the existing results for generalized friendship graphs can be derived from our main results described below.

In this paper, we characterize the two-way $(t,\lambda)$-liking digraphs.
By the definition of a two-way $(t,\lambda)$-liking digraph, it is obvious that a two-way $(2,1)$-liking digraph is a $(2,1)$-liking digraph.
Choi {\it et al.}~\cite{choi2023digraph} showed that a $(2,1)$-liking digraph is a two-way $(2,1)$-liking digraph.
Theorem~\ref{thm:friendship_digraph} which is the primary result in \cite{choi2023digraph} is a complete characterization of $(2,1)$-liking digraphs and therefore the result is true for two-way $(2,1)$-liking digraphs.
Thus it remains to characterize the two-way $(t,\lambda)$-liking digraphs with $(t,\lambda) \neq (2,1)$, which is achieved by the following two theorems.

\begin{Thm}\label{thm:main1}
	If $\lambda \ge 2$, then a two-way $(2,\lambda)$-liking digraph of order $n$ is $k$-diregular where the positive integer $k$ satisfies
	\[(n-1)\lambda=k(k-1). \]
\end{Thm}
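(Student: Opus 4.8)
The plan is to translate the two defining conditions into a pair of matrix identities and then squeeze out a single quadratic equation that every in-degree must satisfy. Let $A$ be the adjacency matrix of the digraph $D$ on vertices $v_1,\dots,v_n$, so $A_{ij}=1$ exactly when there is an arc $v_i\to v_j$. Write $\mathbf{r}$, $\mathbf{s}$ for the out-degree and in-degree vectors and $R=\mathrm{diag}(\mathbf{r})$, $S=\mathrm{diag}(\mathbf{s})$. Since $(AA^{T})_{ij}$ counts the common out-neighbors of $v_i,v_j$ and $(A^{T}A)_{ij}$ counts their common in-neighbors, the two-way $(2,\lambda)$-liking condition is precisely $AA^{T}=\lambda(J-I)+R$ and $A^{T}A=\lambda(J-I)+S$, where $J$ is the all-ones matrix and $I$ the identity. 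Note also that every in- and out-degree lies in the range $[\lambda,n-1]$, because any vertex shares $\lambda$ common out-neighbors (resp.\ in-neighbors) with some other vertex.

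Next I would extract linear relations among the degrees by hitting the first identity with the all-ones vector $\mathbf{1}$. Using $A\mathbf{1}=\mathbf{r}$, $A^{T}\mathbf{1}=\mathbf{s}$, and $(J-I)\mathbf{1}=(n-1)\mathbf{1}$, the identity $AA^{T}\mathbf{1}=A(A^{T}\mathbf{1})$ yields $A\mathbf{s}=\lambda(n-1)\mathbf{1}+\mathbf{r}$, and symmetrically $A^{T}\mathbf{r}=\lambda(n-1)\mathbf{1}+\mathbf{s}$.

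The crux is to combine these into one scalar equation. I would evaluate $A^{T}A\mathbf{s}$ in two ways. Directly from $A^{T}A=\lambda(J-I)+S$ it equals $\lambda s\,\mathbf{1}-\lambda\mathbf{s}+S\mathbf{s}$, where $s=\sum_v d^{-}(v)$ is the number of arcs; substituting the two linear relations instead gives $\lambda(n-1)\mathbf{s}+\lambda(n-1)\mathbf{1}+\mathbf{s}$. Equating the $v$-th coordinates and simplifying produces, for every vertex $v$, the single quadratic
\[
d^{-}(v)^2-(\lambda n+1)\,d^{-}(v)+\lambda(s-n+1)=0 .
\]
Thus all in-degrees are roots of one fixed quadratic $Q$, whose two roots sum to $\lambda n+1$. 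I expect this bookkeeping step to be the main obstacle: one has to see that the cross-terms collapse so cleanly, and the derivation genuinely uses both identities together (this is where the ``two-way'' hypothesis does its work).

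Finally, I would close the argument by a feasibility bound, and this is exactly where $\lambda\ge 2$ is decisive. A general $\lambda$-design can exhibit two distinct block sizes, so ruling that out is the conceptual hurdle; here it is settled cheaply. Each degree is at most $n-1$, so two realized roots of $Q$ would sum to at most $2(n-1)=2n-2$, whereas the root-sum is $\lambda n+1\ge 2n+1$. Hence at most one root is feasible, all in-degrees equal this common value $k$, and $D$ is in-regular. Feeding $\mathbf{s}=k\mathbf{1}$ back into $A\mathbf{s}=\lambda(n-1)\mathbf{1}+\mathbf{r}$ and using $A\mathbf{1}=\mathbf{r}$ gives $(k-1)\mathbf{r}=\lambda(n-1)\mathbf{1}$ (with $k\ge\lambda\ge2$, so $k-1\neq0$), whence $D$ is out-regular as well; comparing the total in- and out-degree sums forces both regularities to share the value $k$, and the same relation reads $k(k-1)=\lambda(n-1)$, as required.
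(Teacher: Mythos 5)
Your proof is correct, but it takes a genuinely different route from the paper's. The paper proves a more general lemma (its Lemma~3.2, valid for all $\lambda\ge t\ge 2$) by a purely local double count at each vertex $v$: it counts pairs $(X,Y)$ with $X$ a $2$-subset of $N^+(v)$ and $Y$ a common in-neighbor of $X$ other than $v$, getting $\binom{d^+(v)}{2}(\lambda-1)=(n-1)\binom{\lambda}{2}$ directly, so each out-degree (and, via the converse digraph, each in-degree) satisfies $k(k-1)=\lambda(n-1)$ with no global unknown and no case analysis on roots; the hypothesis $\lambda\ge2$ enters only through the division by $\lambda-1$. Your matrix computation of $A^{T}A\mathbf{s}$ is instead a global count: it yields a quadratic containing the total arc number $s$, so you must add the extremal step (root sum $\lambda n+1>2n-2$) to exclude two distinct in-degrees, and then back-substitute to get out-regularity and the parameter relation --- this is where your $\lambda\ge2$ does its work. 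Your argument is closer in spirit to the classical Bose--Shrikhande proof for graphs and makes transparent why $\lambda=1$ (the fancy wheels) escapes; the paper's count is shorter, avoids the global quantity $s$, and generalizes uniformly to $t\ge 3$, which it needs later. One small point to make explicit in your write-up: the lower bound $d^{\pm}(v)\ge\lambda$ (used for $k-1\neq0$) requires $n\ge2$, which is guaranteed since a $(2,\lambda)$-liking digraph has at least $\lambda+2$ vertices (Proposition~\ref{prop:outdegree}).
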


\begin{Thm}\label{thm:main2}
	If $t\ge 3$, then the complete digraph on $t+\lambda$ vertices is the only two-way $(t,\lambda)$-liking digraph.
\end{Thm}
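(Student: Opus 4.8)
The plan is to reduce the theorem to the single claim $n = t+\lambda$ and then to read off completeness from a double count. First, the complete digraph on $t+\lambda$ vertices is two-way $(t,\lambda)$-liking: for any $t$-set $S$ the common out-neighbours are exactly the $\lambda$ vertices outside $S$, and symmetrically for in-neighbours. For the converse, let $D$ be two-way $(t,\lambda)$-liking of order $n$, write $N^-(v)$ for the in-neighbourhood of $v$, and note that $v$ is a common out-neighbour of a $t$-set $S$ precisely when $S\subseteq N^-(v)$. Counting incidences of $t$-sets with such $v$ gives
\[ \sum_{v\in V(D)}\binom{d^-(v)}{t}=\lambda\binom{n}{t}. \]
Since $d^-(v)\le n-1$ and $\binom{n-1}{t}=\tfrac{n-t}{n}\binom{n}{t}$, this forces $\lambda\le n-t$, i.e. $n\ge t+\lambda$, with equality if and only if every $d^-(v)=n-1$, that is, if and only if $D$ is complete (and then $n=t+\lambda$). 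The out-degree version is identical after reversing all arcs. Hence the whole theorem is equivalent to the single inequality $n\le t+\lambda$.

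The route I would take to the reverse inequality is through symmetric block designs, which is where the hypothesis $t\ge 3$ enters. The identity above says exactly that the family $\{N^-(v)\}_{v\in V(D)}$ of in-neighbourhoods is a ``$t$-design'' on $n$ points in which every $t$-set lies in exactly $\lambda$ blocks, with precisely $n$ blocks. Once $D$ is known to be diregular, say with in-degree $k$, this is a genuine $t$-$(n,k,\lambda)$ design; being a $t$-design with $t\ge 2$ it is a $2$-design, and since the number of blocks equals the number of points, Fisher's inequality is tight, so it is a \emph{symmetric} $2$-$(n,k,\mu)$ design. In such a design any two distinct blocks meet in exactly $\mu$ points and $\mu(n-1)=k(k-1)$. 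Now the extra content of $t\ge 3$ is that the design is also a $3$-design, with $\lambda_3=\mu\frac{k-2}{n-2}$. Counting the pairs (triple $T\subseteq B$, block $C\ne B$ with $T\subseteq C$) for a fixed block $B$ in two ways gives $\binom{k}{3}(\lambda_3-1)=(n-1)\binom{\mu}{3}$, and substituting $\lambda_3$ together with $\mu(n-1)=k(k-1)$ collapses this, after simplification, to
\[ (k-\mu)^2\,(k-\mu-1)=0. \]
Since a proper design has $\mu<k$, the only possibility is $k-\mu=1$, which with $\mu(n-1)=k(k-1)$ forces $k=n-1$: every in-neighbourhood is the complement of a single vertex, so $D$ is complete and $n=t+\lambda$.

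The hard part — the step I expect to be the real obstacle — is establishing diregularity (constant block size), without which the symmetric-design machinery does not apply. That this step is essential is clear from $t=2$, where no such collapse occurs and Theorem~\ref{thm:main1} genuinely allows non-complete (symmetric-design) examples. Here I would lean on the two-way hypothesis, mirroring the approach used for Theorem~\ref{thm:main1}. A useful intermediate identity, obtained by summing the condition over all $(t-1)$-sets avoiding a fixed vertex $p$, is
\[ \sum_{w\in N^+(p)}\binom{d^-(w)-1}{t-1}=\lambda\binom{n-1}{t-1}, \]
whose right-hand side is independent of $p$; the statement with in- and out-neighbourhoods exchanged also holds. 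These per-vertex balance conditions, together with the global counting identity above and the integrality of the design parameters, are what I would push (via a rank/eigenvalue analysis of the adjacency matrix, as in the $t=2$ case) to force all in-degrees, and hence all out-degrees, to coincide. I expect the bulk of the work to lie precisely in this regularity step, after which the design-theoretic collapse finishes the argument at once.
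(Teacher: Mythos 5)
Your opening reduction is correct and clean: the double count $\sum_{v}\binom{d^-(v)}{t}=\lambda\binom{n}{t}$ does force $n\ge t+\lambda$ with equality exactly when $D$ is complete, and your design-theoretic endgame (symmetric $2$-design from the in-neighbourhoods, the $3$-design identity $\binom{k}{3}(\lambda_3-1)=(n-1)\binom{\mu}{3}$ collapsing to $(k-\mu)^2(k-\mu-1)=0$) is also correct as far as it goes. But that endgame is conditional on diregularity, and diregularity is precisely the point you leave unproved: you say you ``would push'' the per-vertex balance identities ``via a rank/eigenvalue analysis'' to force constant degrees. That is not an argument, and it is the crux of the theorem. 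The natural double counts degenerate when $\lambda<t$: the paper's Lemma~\ref{lem:2-diregular} extracts diregularity from exactly the identity you write down, but only because it can divide by $\binom{\lambda-1}{t-1}$, which requires $\lambda\ge t$. For $\lambda<t$ the identity reads $0=0$ and gives nothing, so the hard sub-case of the theorem is exactly the one your plan does not reach.

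The paper's proof is a three-way case split that avoids proving diregularity in the regime where it is hard. For $\lambda\ge t$ it uses Lemma~\ref{lem:2-diregular} (your identity) to get diregularity and then quotes Theorem~\ref{thm:prev_main2}$(e)\Rightarrow(a)$ --- note this last step already does the job of your entire design-theoretic paragraph, so that part of your proposal re-proves known machinery. For $t\ge\lambda+2$ it quotes Theorem~\ref{thm:prev_main1}, which needs no two-way hypothesis and no diregularity at all. The remaining case $t=\lambda+1$ is where the genuinely new work lives: Lemma~\ref{lem:lambda+1} shows that any two vertices \emph{not} joined by a digon have equal out-degree (via an injection between families of $\lambda$-subsets of neighbourhoods, using the two-way hypothesis in both directions), and Proposition~\ref{prop:lambda+1_complete} then analyses the complement $\overline{D}$: if $\overline{D}$ is weakly connected this propagates equal degrees everywhere, and if not, a partition argument with Proposition~\ref{prop:out-degree} yields a vertex of full out-degree. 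Your proposal contains no substitute for this step, so as written it proves the theorem only for $\lambda\ge t$ (where it is essentially the paper's first case plus a self-contained replacement for Theorem~\ref{thm:prev_main2}) and has a genuine gap for $t\in\{\lambda+1,\lambda+2,\dots\}$.
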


Going a little further from Theorem~\ref{thm:main1}, we present Proposition~\ref{prop:design implies}.
Choi {\it et al.} showed that an $(n,k,\lambda)$-SBIBD can be constructed from a $k$-diregular $(2,\lambda)$-liking digraph of order $n$.
That is, the existence of a $k$-diregular $(2,\lambda)$-liking digraph of order $n$ guarantees the existence of an $(n,k,\lambda)$-SBIBD.
Since a two-way $(2,\lambda)$-liking digraph is a  $(2,\lambda)$-liking digraph,
we can say that an $(n,k,\lambda)$-SBIBD exists if a two-way $(2,\lambda)$-liking digraph of order $n$ exists.
We show that the converse is true for $n\ge 2\lambda$.

Theorem~\ref{thm:main2} extends one of the main theorems in Choi {\it et al.} \cite{choi2024generalizedlikingdigraph}.
By the definition of a two-way $(t,\lambda)$-liking digraph, it is obvious that a two-way $(t,\lambda)$-liking digraph is a $(t,\lambda)$-liking digraph.
Under this circumstance, it is natural to ask for which $t$ and $\lambda$ a $(t,\lambda)$-liking digraph being a two-way $(t,\lambda)$-liking digraph.
For example, it is true when $t=2$ and $\lambda=1$ by \cite{choi2023digraph}. 
In the follow-up paper \cite{choi2024generalizedlikingdigraph}, the authors showed that for $t\ge \lambda+2$, the complete digraph on $t+\lambda$ vertices is the only $(t,\lambda)$-liking digraph.
It is obvious that the complete digraph on $t+\lambda$ vertices is a two-way $(t,\lambda)$-liking digraph.
Thus a $(t,\lambda)$-liking digraph is a two-way $(t,\lambda)$-liking digraph if $t\ge \lambda+2$.
In this paper, we extend the result by showing that it is true for the cases of $t \ge \lambda+1$ (Corollary~\ref{cor:lambda+1_twoway}). 
Finally, this result, together with Theorem~\ref{thm:main2}, extends Theorem~\ref{thm:prev_main1} (Theorem~\ref{thm:lambda+1_complete}).

	\section{Preliminaries}\label{sec:pre}

 A {\it fancy wheel digraph} is obtained from the disjoint union of directed cycles by adding one vertex with arcs to and from each vertex on the cycles.
    A {\it $k$-diregular digraph} is a digraph in which each vertex has outdegree $k$ and indegree $k$ for a positive integer $k$.
	A digraph is said to be {\it diregular} if it is a $k$-diregular digraph for some positive integer $k$.

\begin{Thm}[\cite{choi2023digraph}]\label{thm:friendship_digraph}	
A $(2,1)$-liking digraph is a fancy wheel digraph or is $k$-diregular of order $k^2-k+1$ for some integer $k\ge 2$.
\end{Thm}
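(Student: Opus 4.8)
The plan is to reduce the statement to the de Bruijn--Erd\H{o}s theorem on linear spaces by reading the in-neighborhoods as lines. Write $N^-(w)$ for the set of in-neighbors of a vertex $w$, let $V$ be the vertex set with $|V|=n$ (one checks directly that $n\ge 3$, since $n\le 2$ admits no $(2,1)$-liking digraph), and for distinct $u,v\in V$ let $f(u,v)$ denote their unique common out-neighbor. Since loops are forbidden, $f(u,v)\notin\{u,v\}$, and $f(u,v)$ is precisely the unique vertex $w$ with $u,v\in N^-(w)$. Declaring the sets $L_w:=N^-(w)$ with $|L_w|\ge 2$ to be \emph{lines}, I obtain that every pair of points of $V$ lies on exactly one line, i.e.\ $(V,\{L_w\})$ is a linear space. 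Two distinct lines of size $\ge 2$ cannot coincide as sets (two shared points would yield two common out-neighbors, contradicting $\lambda=1$), so there are at most $n$ lines; and since $|L_w|\le n-1$ for every $w$, no line is universal, so the points are not all collinear.

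Next I would invoke de Bruijn--Erd\H{o}s: a linear space on $n$ points that are not all collinear has at least $n$ lines, with equality exactly for a near-pencil or a projective plane. Combined with the upper bound above, the number of lines is exactly $n$; hence every vertex $w$ contributes a genuine line ($d^-(w)=|L_w|\ge 2$ for all $w$), the assignment $w\mapsto L_w$ is a bijection onto the lines, and the incidence structure is a near-pencil or a projective plane.

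It then remains to translate each case back into the digraph. The number of lines through a point $u$ equals $\#\{w:u\in N^-(w)\}=\#\{w:u\to w\}=d^+(u)$, the out-degree of $u$, while $|L_w|=d^-(w)$, the in-degree of $w$. In the projective-plane case of order $q$ one has $n=q^2+q+1$, every point on $q+1$ lines and every line of size $q+1$; hence $d^+(u)=d^-(w)=q+1$ for all $u,w$, so $D$ is $k$-diregular with $k=q+1\ge 2$ and $n=k^2-k+1$, as required. In the near-pencil case there is a long line of $n-1$ points and $n-1$ short lines of two points each; since $w\notin L_w$, the long line must be $L_h=V\setminus\{h\}$ for a unique vertex $h$, forcing $h\to$ every vertex and (reading the short lines $\{h,x\}=N^-(w)$) every vertex $\to h$. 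The residual arcs, one off-hub out-arc and one off-hub in-arc per vertex of $V\setminus\{h\}$, assemble into a fixed-point-free permutation of $V\setminus\{h\}$ (a fixed point would be a loop), whose cycles are exactly the directed cycles of a fancy wheel digraph with hub $h$; thus $D$ is a fancy wheel digraph.

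I expect the main obstacle to be the faithful reconstruction of the arc set in the near-pencil case: de Bruijn--Erd\H{o}s returns only the incidence pattern of the sets $L_w$, so one must argue carefully that the off-hub in- and out-arcs pair up into a single fixed-point-free permutation (equivalently a disjoint union of directed cycles) rather than some other bipartite pattern, and must verify the boundary values (e.g.\ $q=1$, $n=3$, where the two cases coincide with the complete digraph) so that the stated range $k\ge 2$ is exactly covered. The remainder is routine degree bookkeeping.
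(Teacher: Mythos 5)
Your proposal cannot be matched against a proof in this paper, because the paper does not prove Theorem~\ref{thm:friendship_digraph} at all: it is imported verbatim from \cite{choi2023digraph}, where the argument is a direct, elementary one (a case split on diregularity, with the order $k^2-k+1$ obtained by double counting pairs against their unique common out-neighbor, and the non-diregular case resolved into a fancy wheel by hand). Your route is genuinely different: you encode the digraph as the linear space whose lines are the in-neighborhoods $N^-(w)$, observe that the $(2,1)$-liking condition says exactly that every pair of points lies on a unique line while loop-freeness guarantees no line is universal, and then invoke the equality case of de Bruijn--Erd\H{o}s to force either a projective plane (giving the $k$-diregular case with $n=k^2-k+1$) or a near-pencil (giving the fancy wheel). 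This is correct and complete: the worry you raise about reconstructing the arc set in the near-pencil case is unfounded, since the sets $N^-(w)$ \emph{are} the arc set --- once $N^-(h)=V\setminus\{h\}$ and $N^-(w)=\{h,x_w\}$ for all $w\neq h$ are pinned down, the off-hub arcs are exactly $\{(x_w,w)\}$, a fixed-point-free permutation digraph, i.e.\ a disjoint union of directed cycles. What your approach buys is brevity and a conceptual explanation of why projective planes appear; what it costs is the reliance on the equality-case characterization in de Bruijn--Erd\H{o}s (make sure you cite a version that includes it, not just the inequality), whereas the cited proof is self-contained.

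Two small points to fix. First, in the near-pencil paragraph you swap the justifications: $L_h=N^-(h)=V\setminus\{h\}$ yields ``every vertex $\to h$'' (arcs \emph{into} $h$), while the short lines $\{h,x_w\}=N^-(w)$ yield ``$h\to$ every vertex'' (arcs \emph{out of} $h$); both conclusions are right, but the attributions are crossed. Second, the boundary $n=3$ is harmless --- the triangle is both the degenerate plane of order $1$ and a near-pencil, and $\overleftrightarrow{K}_3$ is simultaneously $2$-diregular of order $3=2^2-2+1$ and a fancy wheel --- and the nonexistence for $n\le 2$ is also covered by the paper's Proposition~\ref{prop:outdegree}.
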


A {\it balanced block design} consists of a set $X$ of $v \geq 2$ elements, called {\it varieties}, and a collection of $b>0$ subsets of $X$, called {\it blocks}, such that the following conditions are satisfied:
	\begin{itemize}
	\item Each block consists of exactly the same number $k$ of varieties where $k>0$;
	\item Each variety appears in exactly the same number $r$ of blocks where $r>0$;
	\item Each pair of varieties appears simultaneously in exactly the same number $\lambda$ of blocks where $\lambda>0$.
	\end{itemize}
A balanced block design with $k < v$ is called a {\it balanced incomplete block design} since each block has fewer varieties than the total number of varieties. 
Such a design is also called a {\it $(b,v,r,k,\lambda)$-BIBD}.
A $(b,v,r,k,\lambda)$-BIBD is said to be {\it symmetric} if $b=v$ and $r=k$.
A symmetric $(b,v,r,k,\lambda)$-BIBD is termed as a {\it$(v,k,\lambda)$-SBIBD}.
It is a well-known fact that in a $(v,k,\lambda)$-SBIBD, any two blocks have exactly $\lambda$ varieties in common.

	Let $A_1,A_2,\ldots,A_n$ be sets. A {\it (complete) system of distinct representatives} is a sequence $(a_1,a_2,\ldots,a_n)$ such that $a_i\in A_i$ for all $i$, and no two of the $a_i$ are the same.
	Hall's Marriage Theorem plays a significant role in this paper, and it is as follows.
\begin{Thm}[\cite{brualdi1977introductory}]\label{thm:MC}
	The family $\mathcal{A}=(A_1,A_2,\ldots,A_n)$ of sets has a system of distinct representatives
	if and only if 
	\begin{center}
	\begin{minipage}{0.85\textwidth}
	(Hall's marriage condition)
for each $1\leq k\leq n$ and each choice of $k$ distinct indices $i_1,i_2,\ldots,i_k$ from $[n]$,
\[\left|A_{i_1} \cup A_{i_2} \cup \cdots \cup A_{i_k}\right| \geq k.\]
	\end{minipage}
	\end{center}
\end{Thm}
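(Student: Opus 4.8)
The plan is to prove the necessity direction directly and the sufficiency direction by induction on $n$, splitting into the two standard cases according to whether some proper subfamily is ``critical''.

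For necessity, suppose $(a_1, a_2, \ldots, a_n)$ is a system of distinct representatives. Given any $k$ distinct indices $i_1, \ldots, i_k$, the elements $a_{i_1}, \ldots, a_{i_k}$ are pairwise distinct and each $a_{i_j}$ belongs to $A_{i_j} \subseteq A_{i_1} \cup \cdots \cup A_{i_k}$; hence the union contains at least $k$ elements, which is exactly Hall's marriage condition.

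For sufficiency I would induct on $n$. The base case $n=1$ is immediate: the condition with $k=1$ forces $A_1 \neq \emptyset$, so any element of $A_1$ serves as a representative. For the inductive step, assume the statement holds for every family of fewer than $n$ sets and suppose $\mathcal{A} = (A_1, \ldots, A_n)$ satisfies Hall's condition. I would distinguish two cases. In the first case, every proper nonempty subfamily is \emph{slack}, meaning $|A_{i_1} \cup \cdots \cup A_{i_k}| \geq k+1$ whenever $1 \leq k < n$. Here I pick any element $a_n \in A_n$, delete it from the remaining sets by setting $A_i' = A_i \setminus \{a_n\}$ for $i < n$, and observe that removing a single element lowers each union size by at most one, so $(A_1', \ldots, A_{n-1}')$ still satisfies Hall's condition. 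By the induction hypothesis this smaller family has a system of distinct representatives, none of which equals $a_n$, and appending $a_n$ yields the desired system for $\mathcal{A}$.

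The second, harder case is when some proper subfamily is \emph{tight}: there exist $k$ with $1 \leq k < n$ and indices that, after relabeling, make $|A_1 \cup \cdots \cup A_k| = k$. Writing $U = A_1 \cup \cdots \cup A_k$, the subfamily $(A_1, \ldots, A_k)$ satisfies Hall's condition, so by induction it has a system of distinct representatives; since this system consists of $k$ distinct elements of $U$ and $|U| = k$, it exhausts $U$. I then pass to the complementary family by setting $A_j' = A_j \setminus U$ for $j > k$ and claim it again satisfies Hall's condition: for any $m$ indices $j_1, \ldots, j_m$ drawn from $\{k+1, \ldots, n\}$,
\[
  |A_{j_1}' \cup \cdots \cup A_{j_m}'| = |A_1 \cup \cdots \cup A_k \cup A_{j_1} \cup \cdots \cup A_{j_m}| - |U| \geq (k+m) - k = m,
\]
where the inequality applies Hall's condition to the combined family of $k+m$ sets and uses $|U| = k$. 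Induction then supplies a system of distinct representatives for $(A_j')_{j>k}$ lying entirely outside $U$, and combining the two systems gives one for $\mathcal{A}$, the representatives being distinct because the two halves draw from the disjoint sets $U$ and its complement. The main obstacle is precisely this second case: one must verify that both the tight subfamily and its complement inherit Hall's condition, and the displayed inequality—using tightness $|U|=k$ together with Hall's condition on the merged family—is the crux that makes the two recursive applications fit together without collision.
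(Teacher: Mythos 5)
Your proof is correct: the necessity argument is immediate, and the sufficiency argument is the classical Halmos--Vaughan induction, with the case split into ``slack'' and ``tight'' subfamilies handled properly --- in particular, the identity $\left|\left(A_{j_1}\cup\cdots\cup A_{j_m}\right)\setminus U\right| = \left|U\cup A_{j_1}\cup\cdots\cup A_{j_m}\right|-|U|$ combined with tightness $|U|=k$ is exactly the right way to verify that the complementary family inherits Hall's condition. Note that the paper itself gives no proof of this statement --- it is quoted from the cited textbook of Brualdi \cite{brualdi1977introductory} --- so there is nothing internal to compare against; your argument matches the standard textbook proof of the theorem.
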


A digraph $D$ is {\it complete} if, for every pair $x$, $y$ of distinct vertices of $D$, both $(x,y) \in A(D)$ and $(y,x) \in A(D)$.
The complete digraph on $n$ vertices is denoted by $\overleftrightarrow{K}_n$.

The results from the previous studies to be used in this paper are as follows.

\begin{Prop}[\cite{plesnik1975graphs}]\label{prop:outdegree}
	Let $D$ be a $(t,\lambda)$-liking digraph.
	Then $|V(D)|\ge t+\lambda$ and $\delta^+(D) \ge t+\lambda-1$.
\end{Prop}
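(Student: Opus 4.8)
The plan is to prove the two bounds separately, both resting on a single observation: since $D$ has no loops, no common out-neighbor of a vertex set $S$ can belong to $S$ itself.

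For the order bound, I would fix any set $S$ of $t$ vertices (such a set exists because the definition presupposes $n \ge t$). By hypothesis $S$ has exactly $\lambda$ common out-neighbors, and each such out-neighbor $w$ satisfies $(u,w) \in A(D)$ for every $u \in S$; if $w \in S$, this would force a loop at $w$, which is forbidden. Hence the $\lambda$ common out-neighbors are distinct from the $t$ vertices of $S$, giving $n \ge t + \lambda$ immediately.

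For the degree bound, fix an arbitrary vertex $v$ and set $m = d^+(v)$; the goal is $m \ge t + \lambda - 1$. The key idea is that whenever $v$ lies in a $t$-set $S$, every one of the $\lambda$ common out-neighbors of $S$ must be an out-neighbor of $v$ lying outside $S$, i.e.\ must lie in $N^+(v) \setminus S$. I would therefore choose $S$ so as to pack as many out-neighbors of $v$ into $S$ as possible, thereby shrinking $N^+(v) \setminus S$. Concretely, if $m \ge t-1$, take $S$ to consist of $v$ together with $t-1$ of its out-neighbors; then $|N^+(v) \setminus S| = m-(t-1)$, and since this set must contain all $\lambda$ common out-neighbors, we get $\lambda \le m-(t-1)$, that is, $m \ge t+\lambda-1$. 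If instead $m \le t-2$, take $S$ to consist of $v$, all $m$ out-neighbors of $v$, and $t-1-m$ further vertices (available because $n \ge t$); now $N^+(v) \subseteq S$, so $S$ has no common out-neighbor at all, contradicting $\lambda \ge 1$. Thus the second case cannot occur, and the first case yields the desired inequality, so $\delta^+(D) \ge t+\lambda-1$.

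The argument is elementary, so there is no deep obstacle; the only points needing care are combinatorial bookkeeping: verifying that $S$ can actually be assembled from distinct vertices (which is exactly where the order bound $n \ge t+\lambda$, or at least $n \ge t$, gets invoked to fill the remaining slots), and handling the boundary value $m=t-1$, where the first case already forces $\lambda \le 0$ and hence the same contradiction. I expect the cleanest write-up to merge the two cases through the identity $|N^+(v) \setminus S| = m - \min(m,\,t-1)$, which turns the whole degree bound into the single requirement $\lambda \le m - \min(m,\,t-1)$.
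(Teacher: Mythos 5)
Your argument is correct. Note that the paper does not actually prove this proposition --- it is imported verbatim from Plesn\'{i}k's 1975 paper as a known result --- so there is no in-paper proof to compare against; your write-up supplies a self-contained elementary proof, and it is essentially the standard one. Both halves rest soundly on the single loop-freeness observation: a common out-neighbor of a $t$-set $S$ would have to dominate itself if it lay in $S$, so the $\lambda$ common out-neighbors are disjoint from $S$, giving $n \ge t+\lambda$; and packing $v$ together with $t-1$ of its out-neighbors into $S$ forces the $\lambda$ common out-neighbors into $N^+(v)\setminus S$, giving $d^+(v) \ge t+\lambda-1$. Your case split on $m \le t-2$ versus $m \ge t-1$ is handled correctly (the subcase $m = t-1$ is absorbed by the inequality $\lambda \le m-(t-1)$ yielding $\lambda \le 0$), and you rightly flag the only genuine hypothesis being used, namely that $n \ge t$ so that a $t$-set exists at all --- which is the standing convention for $(t,\lambda)$-liking digraphs. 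No gaps.
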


The following results are from \cite{choi2024generalizedlikingdigraph}.

\begin{Prop}\label{prop:eulerian}
	Let $D$ be a $(t,\lambda)$-liking digraph.
	If $t\ge \lambda+1$ or $d^+(v)=t+\lambda-1$ for each vertex $v$, then $d^+(v)=d^-(v)$ for every vertex $v$ in $D$.
\end{Prop}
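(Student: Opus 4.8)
The engine of my proof would be a single double-counting identity. For every $t$-subset $S$ of $V(D)$ the liking property gives $\left|\bigcap_{u\in S}N^+(u)\right|=\lambda$, and a vertex $w$ is a common out-neighbour of $S$ exactly when $S\subseteq N^-(w)$. Counting the incident pairs $(S,w)$ in two ways therefore yields
\[
\sum_{w\in V(D)}\binom{d^-(w)}{t}=\lambda\binom{n}{t},
\]
and, restricting to the $t$-subsets that contain a fixed vertex $v$, the local refinement
\[
\sum_{w\in N^+(v)}\binom{d^-(w)-1}{t-1}=\lambda\binom{n-1}{t-1}\qquad(v\in V(D)).
\]
These identities are the only place where the liking hypothesis enters, and it is essential to use them rather than degree information alone: an out-regular digraph need not be balanced, so any argument ignoring them cannot succeed.

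Given the identities, the plan is to reduce the pointwise claim to a one-sided estimate. Since $\sum_{v}d^+(v)=\sum_{v}d^-(v)=|A(D)|$ holds automatically, it suffices to prove $d^-(v)\le d^+(v)$ for every vertex $v$; summing then forces equality at every vertex. To obtain $d^-(v)\le d^+(v)$ I would set up a bipartite matching between the in-neighbours and the out-neighbours of $v$ and invoke Hall's Marriage Theorem (Theorem~\ref{thm:MC}): to each in-neighbour $u$ of $v$ I assign the family of common out-neighbours that $u$ shares with $v$ inside $N^+(v)$, so that a system of distinct representatives injects $N^-(v)$ into $N^+(v)$. The content is the verification of the marriage condition, which is exactly where Proposition~\ref{prop:outdegree} (giving $\delta^+(D)\ge t+\lambda-1$) and the local identity above must be combined to stop the assigned families from collapsing onto too few out-neighbours.

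The two hypotheses correspond to the two regimes in which this non-collapsing is guaranteed. When every out-degree equals the extremal value $t+\lambda-1$ of Proposition~\ref{prop:outdegree}, the out-neighbourhoods are $n$ blocks of the smallest admissible size whose $t$-wise intersections are all exactly $\lambda$; substituting $d^+\equiv t+\lambda-1$ into the global identity fixes $\sum_w\binom{d^-(w)}{t}$, and a convexity comparison against the extremal configuration (with equality case $n=t+\lambda$) shows that such a block system can only be the collection of all $(t+\lambda-1)$-subsets of a $(t+\lambda)$-set, so that $D=\overleftrightarrow{K}_{t+\lambda}$ and balance is immediate. When $t\ge\lambda+1$ I would first dispose of the already-characterised boundary cases: for $(t,\lambda)=(2,1)$ the balance is read off from Theorem~\ref{thm:friendship_digraph}, and for $t\ge\lambda+2$ from the known completeness result, leaving only the critical case $t=\lambda+1$ (where then $\lambda\ge2$, so the assigned families have size $\lambda\ge2$) for the Hall argument of the previous paragraph.

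The step I expect to be the genuine obstacle is the verification of Hall's condition in the critical case $t=\lambda+1$, where $D$ need not be complete (the fancy wheel digraphs already exhibit non-completeness at $(t,\lambda)=(2,1)$). There one cannot reduce to a rigid design; instead one must show that for every $W\subseteq N^-(v)$ the common out-neighbours contributed by $W$ already occupy at least $|W|$ vertices of $N^+(v)$, with the only leverage being the exact count $\lambda$ of $t$-wise common out-neighbours together with $\delta^+(D)\ge t+\lambda-1$. Making this counting tight enough to certify the marriage condition, equivalently ruling out a deficient set $W$ by a pigeonhole contradiction against the liking identity, is the crux of the whole proposition.
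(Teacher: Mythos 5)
The paper does not actually prove Proposition~\ref{prop:eulerian}; it imports it verbatim from \cite{choi2024generalizedlikingdigraph}, so your proposal can only be judged on its own terms. Your two counting identities are correct, and the reduction ``since $\sum_v d^+(v)=\sum_v d^-(v)$, it suffices to show $d^-(v)\le d^+(v)$ for every $v$'' is sound. But the argument has two genuine gaps. First, the step you yourself flag as the crux --- verifying Hall's condition for the families $A_u=N^+(u)\cap N^+(v)$, $u\in N^-(v)$, in the case $t=\lambda+1$ --- is not a technicality that can be deferred: since $\bigcup_{u\in N^-(v)}A_u\subseteq N^+(v)$, Hall's condition for the full set $W=N^-(v)$ is literally the inequality $d^-(v)\le d^+(v)$ you are trying to prove, so the entire content of the proposition is concentrated in the step you leave open. (The analogous verification in the paper's Claim~A leans on the rigid parameters of an SBIBD, in particular exact pairwise block intersections; here you would need to manufacture comparable intersection bounds from the liking property alone.) Second, your disposal of $t\ge\lambda+2$ via ``the known completeness result'' (Theorem~\ref{thm:prev_main1}) is circular relative to the source: in \cite{choi2024generalizedlikingdigraph} that theorem is derived \emph{from} Proposition~\ref{prop:eulerian}.

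The convexity argument for the regime $d^+(v)\equiv t+\lambda-1$ also does not work as stated. Jensen applied to the convex function $x\mapsto\binom{x}{t}$ gives
\[
\lambda\binom{n}{t}=\sum_{w}\binom{d^-(w)}{t}\ \ge\ n\binom{t+\lambda-1}{t},
\]
and this inequality is automatically satisfied for every $n\ge t+\lambda$ (compare the products $(n-1)\cdots(n-t+1)$ and $(t+\lambda-1)\cdots(\lambda+1)$ termwise), so it forces neither $n=t+\lambda$ nor constancy of $d^-$. Indeed, the two identities together do not determine the in-degree sequence: for $t=2$, $\lambda=1$, $n=7$ the sequence $(6,4,1,1,1,1,0)$ satisfies both $\sum_w d^-(w)=14$ and $\sum_w\binom{d^-(w)}{2}=15+6=21=\binom{7}{2}$. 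This contradicts your premise that the two counting identities ``are the only place where the liking hypothesis enters''; additional combinatorial structure of the out-neighbourhoods must be used, and that is precisely what is missing.
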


\begin{Prop}\label{prop:out-degree}
Let $D$ be a $(t,\lambda)$-liking digraph.
For each $1\leq i \leq t-1$, every $t-i$ vertices have at least $\lambda+i$ common out-neighbors in $D$.
\end{Prop}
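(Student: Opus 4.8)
The plan is to prove the statement by a downward induction on the size $t-i$ of the vertex set, descending from $t$ to $1$, which is the same as an induction on $i$ from $0$ to $t-1$. Throughout, for a set $S$ of vertices write $N^{+}(S)=\bigcap_{v\in S}N^{+}(v)$ for the set of common out-neighbors of $S$, so that the defining hypothesis reads $|N^{+}(T)|=\lambda$ whenever $|T|=t$. Two elementary facts will do all the work: first, $N^{+}(S\cup\{u\})=N^{+}(S)\cap N^{+}(u)$ for any vertex $u$; second, since $D$ has no loops, every $u\in N^{+}(S)$ satisfies both $u\notin S$ and $u\notin N^{+}(u)$.

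Before the induction I would record that every set $S$ with $|S|\le t$ has $|N^{+}(S)|\ge\lambda$, and in particular $N^{+}(S)\neq\varnothing$: extend $S$ to a set $T$ of exactly $t$ vertices, which is possible since $|V(D)|\ge t+\lambda$ by Proposition~\ref{prop:outdegree}, and then use $N^{+}(T)\subseteq N^{+}(S)$ together with $|N^{+}(T)|=\lambda$. This guarantees that at each step a common out-neighbor is available to absorb.

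For the inductive step, fix $S$ with $|S|=t-i$ where $1\le i\le t-1$, and assume the bound $|N^{+}(S')|\ge\lambda+(i-1)$ has been established for every set $S'$ of size $t-i+1$; the base case $i=0$ is the defining property $|N^{+}(T)|=\lambda$ for $|T|=t$. Pick any $u\in N^{+}(S)$, available by the previous paragraph. By the no-loops fact $u\notin S$, so $S\cup\{u\}$ has size $t-i+1$ and the induction hypothesis gives $|N^{+}(S\cup\{u\})|\ge\lambda+i-1$. On the other hand $N^{+}(S\cup\{u\})=N^{+}(S)\cap N^{+}(u)\subseteq N^{+}(S)\setminus\{u\}$, since $u\notin N^{+}(u)$, so $|N^{+}(S\cup\{u\})|\le|N^{+}(S)|-1$. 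Combining the two inequalities yields $|N^{+}(S)|\ge\lambda+i$, completing the induction. Equivalently, one can unroll this into a single chain: starting from $S$, repeatedly absorb a common out-neighbor, each absorption strictly shrinking the common out-neighborhood by at least one; after $i$ absorptions the set has size $t$ and exactly $\lambda$ common out-neighbors, so the original $N^{+}(S)$ had at least $\lambda+i$ elements.

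I expect no serious obstacle here, as the argument is driven entirely by the loop-freeness of $D$, which forces the strict decrease witnessed by $u\in N^{+}(S)\setminus N^{+}(S\cup\{u\})$. The only points needing care are the bookkeeping of the index shift and verifying that the intermediate sets stay of size at most $t$ with a nonempty common out-neighborhood, so that a vertex is always available to absorb; both are handled by the preliminary observation and by the hypothesis $1\le i\le t-1$.
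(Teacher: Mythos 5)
Your proof is correct and complete: the downward induction, with looplessness forcing $u\in N^{+}(S)\setminus N^{+}(S\cup\{u\})$ and hence the strict drop $|N^{+}(S\cup\{u\})|\le|N^{+}(S)|-1$, together with the preliminary check that every set of at most $t$ vertices has a common out-neighbor, gives exactly the claimed bound. Note that the paper itself states Proposition~\ref{prop:out-degree} without proof, importing it from \cite{choi2024generalizedlikingdigraph}; your ``absorb a common out-neighbor'' argument is the standard one for this result, so there is nothing to flag.
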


\begin{Thm}\label{thm:prev_main1}
If $t\ge \lambda+2$, then the complete digraph on $t+\lambda$ vertices is the only $(t,\lambda)$-liking digraph.
\end{Thm}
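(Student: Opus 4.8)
First note that $\overleftrightarrow{K}_{t+\lambda}$ is itself a $(t,\lambda)$-liking digraph: any $t$ of its vertices have precisely the remaining $\lambda$ vertices as common out-neighbors. For uniqueness, let $D$ be any $(t,\lambda)$-liking digraph of order $n$. Applying Proposition~\ref{prop:out-degree} with $i=t-1$ (a single vertex has at least $\lambda+(t-1)$ common out-neighbors) gives $\delta^+(D)\ge t+\lambda-1$, and Proposition~\ref{prop:outdegree} gives $n\ge t+\lambda$. Since $t\ge\lambda+2$, Proposition~\ref{prop:eulerian} yields $d^+(v)=d^-(v)$ for every vertex $v$. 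The plan is to prove $n=t+\lambda$: once this is done, $\delta^+(D)\ge t+\lambda-1=n-1$ forces $d^+(v)=n-1$ for every $v$, so every vertex is an out-neighbor (hence, by $d^+=d^-$, also an in-neighbor) of every other vertex, that is, $D=\overleftrightarrow{K}_n=\overleftrightarrow{K}_{t+\lambda}$.

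The engine of the argument is the observation that out-neighborhoods inherit the liking property with $t$ lowered by one. Fix a vertex $z$ and put $W=N^+(z)$. For any $(t-1)$-subset $T\subseteq W$, the set $\{z\}\cup T$ is a $t$-set, so it has exactly $\lambda$ common out-neighbors; these all lie in $N^+(z)=W$, and because $N^+_{D[W]}(u)=N^+_D(u)\cap W$ for $u\in W$, they are exactly the common out-neighbors of $T$ computed inside the induced subdigraph $D[W]$. Hence $D[W]$ is a $(t-1,\lambda)$-liking digraph. I would then run induction on $t$ for fixed $\lambda$. For the inductive step $t\ge\lambda+3$: for each $z$, the digraph $D[N^+(z)]$ is $(t-1,\lambda)$-liking with $t-1\ge\lambda+2$, so by the induction hypothesis $D[N^+(z)]\cong\overleftrightarrow{K}_{t+\lambda-1}$ and thus $d^+(z)=t+\lambda-1$. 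As $z$ was arbitrary and $d^+=d^-$, the digraph $D$ is $(t+\lambda-1)$-diregular. Now I double-count incidences between $t$-subsets and the in-neighborhoods $\{N^-(y)\}_{y\in V(D)}$: a $t$-set $S$ has $y$ as a common out-neighbor exactly when $S\subseteq N^-(y)$, so $\sum_{y}\binom{|N^-(y)|}{t}=\lambda\binom{n}{t}$. With $|N^-(y)|\equiv t+\lambda-1$ this reads $n\binom{t+\lambda-1}{t}=\lambda\binom{n}{t}$, whose unique solution with $n\ge t+\lambda$ is $n=t+\lambda$: equality holds at $n=t+\lambda$, while the right-hand side, a polynomial of degree $t\ge 2$, strictly exceeds the linear left-hand side for larger $n$. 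This yields $n=t+\lambda$ and completes the step.

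The main obstacle is the base case $t=\lambda+2$, where the reduction produces only $(\lambda+1,\lambda)$-liking subdigraphs $D[N^+(z)]$, and these need not be complete: already for $\lambda=1$ the $(2,1)$-liking digraphs are classified by Theorem~\ref{thm:friendship_digraph} and include non-complete examples, so one cannot pin down $d^+(z)$ as above, and diregularity no longer falls out for free. For $\lambda=1$ the entire family (all $t\ge 3$) is settled directly by Plesn\'{i}k's theorem \cite{plesnik1975graphs}, which identifies $\overleftrightarrow{K}_{t+1}$ as the unique $(t,1)$-liking digraph; so the genuine difficulty is confined to $\lambda\ge 2$ with $t=\lambda+2$. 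There I would attack the problem by first establishing that $D$ is diregular, after which $\{N^-(y)\}_{y\in V(D)}$ becomes a genuine $t$-design whose number of blocks equals its number of points, and the Fisher--Ray--Chaudhuri--Wilson rigidity for such a tight $t$-design with $t\ge 2$ should force the trivial design, i.e.\ $n=t+\lambda$ and $D=\overleftrightarrow{K}_{2\lambda+2}$. Proving diregularity in this borderline case — presumably through a careful analysis of the exact intersection pattern of out-neighborhoods, combining the exactness of the common-out-neighbor count with the lower bounds of Proposition~\ref{prop:out-degree} — is where I expect the real work to lie.
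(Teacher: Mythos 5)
Your verification half, your reduction, and your inductive step are all sound: $D[N^+(z)]$ is indeed a $(t-1,\lambda)$-liking digraph (the common out-neighbors of $\{z\}\cup T$ lie in $N^+(z)$ and coincide with the common out-neighbors of $T$ computed in $D[N^+(z)]$), the count $\sum_{y}\binom{d^-(y)}{t}=\lambda\binom{n}{t}$ is correct, and under $(t+\lambda-1)$-diregularity the equation $n\binom{t+\lambda-1}{t}=\lambda\binom{n}{t}$ does have $n=t+\lambda$ as its unique solution with $n\ge t+\lambda$ (the difference $\lambda\binom{n}{t}-n\binom{t+\lambda-1}{t}$ is strictly increasing past $n=t+\lambda$, as your degree comparison indicates). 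The case $\lambda=1$ is legitimately disposed of by Plesn\'{i}k \cite{plesnik1975graphs}. But what you have is not a proof: for every $\lambda\ge 2$ the base case $t=\lambda+2$ is left as a declared hope (``should force,'' ``where I expect the real work to lie''), and since your induction on $t$ bottoms out exactly there, the entire theorem for $\lambda\ge 2$ is unproven. You correctly diagnose why the reduction stalls --- $D[N^+(z)]$ is only $(\lambda+1,\lambda)$-liking, and such digraphs need not be complete (for $\lambda=1$, Theorem~\ref{thm:friendship_digraph} exhibits fancy wheels) --- but diagnosing the obstacle is not the same as overcoming it. Diregularity in this borderline case is the actual mathematical content, and nothing in your sketch (``careful analysis of the exact intersection pattern'') supplies it.

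Two further remarks. First, even granting diregularity, your Fisher--Ray-Chaudhuri--Wilson plan has a wrinkle: the blocks $N^-(y)$ need not be pairwise distinct a priori, and the tight-design rigidity you invoke is stated for simple designs; you would need to rule out repeated blocks or cite a version that tolerates them. A cheaper endgame exists inside the paper's own toolkit: once $D$ is diregular and $t\ge 3$, Theorem~\ref{thm:prev_main2}$(e)\Rightarrow(a)$ yields $D\cong\olr{K}_{t+\lambda}$ directly --- though since Theorems~\ref{thm:prev_main1} and~\ref{thm:prev_main2} both come from \cite{choi2024generalizedlikingdigraph}, you would have to check the source to avoid circularity. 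Second, for calibration: the paper under review does not prove this statement at all; it imports it verbatim from \cite{choi2024generalizedlikingdigraph}, so there is no in-paper proof to compare against. What can be said is that the borderline case $t=\lambda+2$, $\lambda\ge 2$ that you leave open is precisely where the cited proof must do its work, so your proposal reconstructs the easy outer shell of the argument while omitting its core.
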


\begin{Thm}\label{thm:prev_main2}
Let $D$ be a $(t,\lambda)$-liking digraph for some positive integers $t, \lambda$ with $t\ge 2$.
Then the following are equivalent.
\begin{enumerate}[(a)]
	\item $D$ is complete on $t+\lambda$ vertices, that is, $D \cong \olr{K}_{t+\lambda}$.
	\item $D$ is a $(t-1,\lambda+1)$-liking digraph.
	\item $d^+(v)=t+\lambda-1$ for each vertex $v$.
\end{enumerate}
Furthermore, (a) is equivalent to the condition (d) if $(t, \lambda) \neq (2,1)$; (e) if $t\ge 3$, where
\begin{enumerate}[(a)]
	\item[(d)] there is a vertex $v$ satisfying $N^+(v)=V(D)\setminus \{v\}$;
	\item[(e)] $D$ is diregular.
\end{enumerate}

\end{Thm}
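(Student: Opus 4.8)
The plan is to establish the cycle (a)$\Rightarrow$(b)$\Rightarrow$(c)$\Rightarrow$(a) and then to reduce (d) and (e) to these. The three implications leaving (a) are immediate: in $\olr{K}_{t+\lambda}$ every vertex dominates all the others, so $N^+(v)=V(D)\setminus\{v\}$ has size $t+\lambda-1$ (this is (c), and it also gives (d) and (e)), while the common out-neighbors of any set $S$ are exactly $V(D)\setminus S$, so a $(t-1)$-set has $(t+\lambda)-(t-1)=\lambda+1$ of them, which is (b). So the work is all in the reverse directions.

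The centerpiece is (c)$\Rightarrow$(a), which I would prove by double counting. Assuming $d^+(v)=t+\lambda-1$ for every vertex, Proposition~\ref{prop:eulerian} applies (its second hypothesis is precisely this condition) and gives $d^-(v)=d^+(v)=t+\lambda-1$ for all $v$. Now count the pairs $(S,x)$ with $S$ a $t$-subset of $V(D)$ and $x$ a common out-neighbor of $S$. Summing over $S$ gives $\lambda\binom{n}{t}$, while summing over $x$ gives $\sum_x\binom{d^-(x)}{t}$, since $x$ is a common out-neighbor of $S$ exactly when $S\subseteq N^-(x)$. Hence
\[ n\binom{t+\lambda-1}{t}=\lambda\binom{n}{t}. \]
One checks directly that $n=t+\lambda$ is a solution, and since $\binom{n}{t}/n=\tfrac{1}{t!}\prod_{i=1}^{t-1}(n-i)$ is strictly increasing in $n$ for $n\ge t$, it is the only one. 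Thus $n=t+\lambda$, and then $d^+(v)=t+\lambda-1=n-1$ forces $N^+(v)=V(D)\setminus\{v\}$ for every $v$, i.e.\ $D\cong\olr{K}_{t+\lambda}$.

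For (b)$\Rightarrow$(c) the case $t=2$ is free: hypothesis (b) then says $D$ is $(1,\lambda+1)$-liking, which literally means $d^+(v)=\lambda+1=t+\lambda-1$ for every $v$. For $t\ge3$ the real content is that the lower bound $d^+(v)\ge t+\lambda-1$ coming from Proposition~\ref{prop:outdegree} (equivalently Proposition~\ref{prop:out-degree} with $i=t-1$) is tight, and I expect this to be the main obstacle. The tool I would use is the rigidity forced by carrying two liking parameters at once: for a $(t-1)$-set $S$ the common out-neighborhood $C_S$ has size exactly $\lambda+1$ by (b), while for each $w\notin S$ the $t$-set $S\cup\{w\}$ has exactly $\lambda$ common out-neighbors, so $w$ fails to be an out-neighbor of precisely one vertex of $C_S$. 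Analyzing this ``exactly one'' incidence pattern, and how it is constrained as $S$ varies, is meant to pin every out-degree to $t+\lambda-1$. The difficulty is structural rather than enumerative: the liking hypotheses control only common out-neighbors, hence in-degrees, so out-degree regularity does not fall out of a bare count and genuinely needs the finer analysis.

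Finally I would reduce (d) and (e) to what is already proved. For (d)$\Rightarrow$(a), if $N^+(v)=V(D)\setminus\{v\}$ then, comparing each $(t-1)$-set $S\not\ni v$ with the $t$-set $S\cup\{v\}$ and invoking Proposition~\ref{prop:out-degree}, one forces $|C_S|=\lambda+1$ with $v\in C_S$; hence every vertex points to $v$, so $v$ is universal in both directions and $D-v$ is a $(t-1,\lambda)$-liking digraph. One then shows $D-v\cong\olr{K}_{t+\lambda-1}$ by induction on $t$, the base case $t=2$ using $\lambda\ge2$ (that is, $(t,\lambda)\neq(2,1)$) through the already-proved (c)$\Rightarrow$(a) at parameters $(2,\lambda-1)$, and reattaching $v$ yields $\olr{K}_{t+\lambda}$. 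For (e)$\Rightarrow$(a) with $t\ge3$: when $t\ge\lambda+2$, Theorem~\ref{thm:prev_main1} already gives $D\cong\olr{K}_{t+\lambda}$ outright; when $3\le t\le\lambda+1$, diregularity lets me put $d^+(x)=d^-(x)=k$, so the double count of the previous paragraph becomes $k(k-1)\cdots(k-t+1)=\lambda(n-1)(n-2)\cdots(n-t+1)$, and together with $k\ge t+\lambda-1$ and $t\ge3$ this forces $k=t+\lambda-1$, whence (c) and then (a). Both reductions ultimately hinge on controlling the order $n$, the same point that makes (b)$\Rightarrow$(c) the technical heart of the theorem.
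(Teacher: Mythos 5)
First, a point of reference: the paper does not prove this statement at all --- it is imported verbatim from \cite{choi2024generalizedlikingdigraph} as a known result, so there is no in-paper proof to compare against. Judged on its own merits, your proposal establishes some pieces correctly but has genuine gaps in three of the four nontrivial implications, so it does not constitute a proof.

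What works: (c)$\Rightarrow$(a) is sound. Proposition~\ref{prop:eulerian} gives $d^-(v)=t+\lambda-1$ for every $v$, the double count $\sum_x\binom{d^-(x)}{t}=\lambda\binom{n}{t}$ yields $n\binom{t+\lambda-1}{t}=\lambda\binom{n}{t}$, and the strict monotonicity of $\binom{n}{t}/n$ for $n\ge t$ pins $n=t+\lambda$. The gaps are elsewhere. (1)~For (b)$\Rightarrow$(c) with $t\ge 3$ you stop at a declared intention (``is meant to pin every out-degree to $t+\lambda-1$'') and explicitly label it the main obstacle; nothing is actually proved, so the cycle (a)$\Rightarrow$(b)$\Rightarrow$(c)$\Rightarrow$(a) is broken. (2)~The arithmetic you invoke for (e)$\Rightarrow$(a) in the range $3\le t\le\lambda+1$ does not do what you claim: the equation $n\binom{k}{t}=\lambda\binom{n}{t}$ together with $k\ge t+\lambda-1$ and $k\le n-1$ does \emph{not} force $k=t+\lambda-1$. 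For $t=3$, $\lambda=2$ the pair $(n,k)=(7,5)$ satisfies $7\binom{5}{3}=70=2\binom{7}{3}$ with $5\ge 4$ and $5\le 6$, so the bare count admits spurious parameter sets and ruling them out requires structural, not enumerative, input. (3)~In (d)$\Rightarrow$(a) the induction is not correctly set up: applying the inductive hypothesis to the $(t-1,\lambda)$-liking digraph $D-v$ requires exhibiting a vertex $u$ with $N^+_{D-v}(u)=V(D-v)\setminus\{u\}$, which you never produce; moreover when $(t-1,\lambda)=(2,1)$ the implication (d)$\Rightarrow$(a) is false for the smaller digraph (a fancy wheel digraph has a universal-out hub but is not complete), so that branch of the induction would have to be rerouted, e.g.\ through Theorem~\ref{thm:prev_main1}.
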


\section{Proofs of Theorems~\ref{thm:main1} and \ref{thm:main2}}

Given a digraph $D$, the {\it converse} of $D$, denoted by $D^{\leftarrow}$, is the digraph obtained from $D$ by reversing the direction of each arc in $D$.
Then, by the definition of a two-way $(t,\lambda)$-liking digraph, the following proposition is immediately true.

\begin{Prop}\label{prop:converse}
	The converse of a two-way $(t,\lambda)$-liking digraph is a two-way $(t,\lambda)$-liking digraph.
\end{Prop}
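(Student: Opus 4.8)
The plan is to exploit the elementary fact that reversing all arcs of a digraph simply interchanges the roles of out-neighborhoods and in-neighborhoods. First I would fix notation: for a digraph $D$ and a vertex $v$, write $N_D^+(v)$ and $N_D^-(v)$ for the out- and in-neighborhoods of $v$ in $D$. By the very definition of the converse, an arc $(x,y)$ lies in $D^{\leftarrow}$ precisely when $(y,x)$ lies in $D$, so that $N_{D^{\leftarrow}}^+(v)=N_D^-(v)$ and $N_{D^{\leftarrow}}^-(v)=N_D^+(v)$ for every vertex $v$.

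Next I would translate the two defining conditions through this identity. Let $D$ be a two-way $(t,\lambda)$-liking digraph and let $S$ be any set of $t$ vertices. The set of common out-neighbors of $S$ in $D^{\leftarrow}$ is $\bigcap_{v\in S} N_{D^{\leftarrow}}^+(v)=\bigcap_{v\in S} N_D^-(v)$, which is exactly the set of common in-neighbors of $S$ in $D$; by hypothesis this set has size $\lambda$. Symmetrically, the set of common in-neighbors of $S$ in $D^{\leftarrow}$ equals the set of common out-neighbors of $S$ in $D$, again of size $\lambda$. Hence every $t$ vertices have exactly $\lambda$ common out-neighbors and exactly $\lambda$ common in-neighbors in $D^{\leftarrow}$, so $D^{\leftarrow}$ is itself a two-way $(t,\lambda)$-liking digraph.

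Because the defining property is phrased symmetrically in out- and in-neighbors, no genuine obstacle arises; the argument is purely a matter of unwinding definitions, which is precisely why the statement is asserted to be ``immediately true.'' The only point deserving a moment's care is to confirm that both halves of the condition survive the swap \emph{simultaneously} rather than merely one of them—and this is guaranteed by the two-sided nature of the definition, since the hypothesis supplies exactly $\lambda$ common neighbors on each side to be exchanged.
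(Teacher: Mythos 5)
Your proof is correct and matches the paper's reasoning exactly; the paper simply asserts the proposition as immediate from the definition, and your argument spells out the standard verification that reversing arcs swaps $N^+$ and $N^-$ while the defining condition is symmetric in the two with the same parameter $\lambda$.
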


\begin{Lem}\label{lem:2-diregular}
		Let $D$ be a two-way $(t,\lambda)$-liking digraph of order $n$ with $\lambda \ge t \ge 2$.
		Then $D$ is $k$-diregular where  	\[{n-1 \choose t-1}{\lambda \choose t}={k \choose t}{\lambda-1 \choose t-1}. \qedhere \]
\end{Lem}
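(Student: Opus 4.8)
The plan is to determine every out-degree of $D$ through a single double-counting identity anchored at an arbitrary but fixed vertex $v$. Consider the collection of ordered pairs $(S,U)$ of $t$-element subsets of $V(D)$ subject to the conditions that $v\in S$ and that every vertex of $S$ has every vertex of $U$ as an out-neighbor. This last condition is symmetric: it says simultaneously that $U$ is contained in the set of common out-neighbors of $S$ and that $S$ is contained in the set of common in-neighbors of $U$, so counting these pairs will force both halves of the two-way hypothesis into play exactly once.

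First I would count by choosing $S$ before $U$. There are $\binom{n-1}{t-1}$ ways to pick a $t$-set $S$ containing $v$, and for each such $S$ the two-way property guarantees exactly $\lambda$ common out-neighbors, giving $\binom{\lambda}{t}$ admissible choices of $U$. This produces the left-hand side $\binom{n-1}{t-1}\binom{\lambda}{t}$. Next I would count by choosing $U$ before $S$. Validity forces $v$ to be an out-neighbor of every vertex of $U$, so $U$ must be a $t$-subset of $N^+(v)$, of which there are $\binom{d^+(v)}{t}$; for each such $U$ the two-way property yields exactly $\lambda$ common in-neighbors, one of which is $v$, so the remaining $t-1$ vertices of $S$ are chosen from the other $\lambda-1$ common in-neighbors in $\binom{\lambda-1}{t-1}$ ways. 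This produces the right-hand side $\binom{d^+(v)}{t}\binom{\lambda-1}{t-1}$. Equating the two counts gives
\[\binom{n-1}{t-1}\binom{\lambda}{t}=\binom{d^+(v)}{t}\binom{\lambda-1}{t-1}\]
for every vertex $v$, which is exactly the asserted equation with $k=d^+(v)$.

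It then remains to upgrade this per-vertex identity to diregularity. Since $\lambda\ge t$, both $\binom{\lambda}{t}$ and $\binom{\lambda-1}{t-1}$ are positive, and since $n\ge t+\lambda$ by Proposition~\ref{prop:outdegree} the left-hand side is positive; hence $\binom{d^+(v)}{t}$ is one fixed positive value independent of $v$. Because $\binom{d^+(v)}{t}>0$ we have $d^+(v)\ge t$, and on the integers $x\ge t$ the map $x\mapsto\binom{x}{t}$ is strictly increasing, so $d^+(v)$ is uniquely pinned down; call this common value $k$. Applying the identical argument to the converse $D^{\leftarrow}$, which is again a two-way $(t,\lambda)$-liking digraph by Proposition~\ref{prop:converse}, shows that every in-degree of $D$ satisfies the same equation and hence equals the same $k$. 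Thus $d^+(v)=d^-(v)=k$ for all $v$ and $D$ is $k$-diregular.

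I expect the main obstacle to be this final monotonicity step rather than the counting: one must verify that the governing equation has a unique integer solution $x\ge t$, which is precisely where the hypothesis $\lambda\ge t$ is essential---it makes the right-hand constant strictly positive and thereby rules out the degenerate solutions $\binom{x}{t}=0$ with $x<t$. A secondary point to get right is the symmetry of the validity condition on $(S,U)$, ensuring that the out-neighbor count is evaluated via $S$ and the in-neighbor count via $U$, so that each invocation uses the correct side of the two-way property.
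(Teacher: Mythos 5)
Your proposal is correct and is essentially the paper's own argument: your pairs $(S,U)$ with $v\in S$ correspond exactly to the paper's pairs $(X,Y)$ via $U=X$ and $S=\{v\}\cup Y$, yielding the identical two counts $\binom{n-1}{t-1}\binom{\lambda}{t}=\binom{d^+(v)}{t}\binom{\lambda-1}{t-1}$, followed by the same passage to the converse digraph; your explicit monotonicity remark (that $x\mapsto\binom{x}{t}$ is strictly increasing for $x\ge t$ and the count is positive) just spells out what the paper leaves implicit. One small wording slip: you say validity forces ``$v$ to be an out-neighbor of every vertex of $U$,'' when you mean every vertex of $U$ is an out-neighbor of $v$ (which is the conclusion $U\subseteq N^+(v)$ you in fact use), so nothing is affected.
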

\begin{proof}
	Take a vertex $v$.
	We consider the set $S_v^{+}$ of ordered pairs $(X,Y)$ such that $X$ is a $t$-subset of $N^+(v)$ and $Y$ is a $(t-1)$-subset of $\left(\bigcap_{x\in X} N^-(x)\right)\setminus \{v\}$, that is,
	\[S_v^{+}:=\left\{(X, Y) \colon\, |X|=t, |Y|=t-1, X\subseteq N^+(v), Y\subseteq \left(\bigcap_{x\in X} N^-(x)\right) \setminus \{v\}\right\}. \]
	We may compute $|S_v^{+}|$ in two ways.
	First, fix $X \subseteq N^+(v)$ with $|X|=t$.
	There are ${d^{+}(v) \choose t}$ ways of doing so.
	Since $D$ is a two-way $(t,\lambda)$-liking digraph, the vertices in $X$ have exactly $\lambda$ common in-neighbors including $v$.
	Thus, given $X$, there are  ${\lambda-1 \choose t-1}$ ways to choose $Y$ so that $(X,Y) \in S_v^{+}$.
	Therefore
	\[|S^+_v|= {d^{+}(v) \choose t}{\lambda-1 \choose t-1}.\]
	To apply the second method, fix $Y \subseteq V(D)\setminus \{v\}$ with $|Y|=t-1$.
	There are ${n-1 \choose t-1}$ ways of choosing such a $Y$.
	Since any $t$ vertices in $D$ have exactly $\lambda$ common out-neighbors, 
	\[\left| N^+(v) \cap \bigcap_{y\in Y}N^+(y)\right|=\lambda. \]
	Thus there are ${\lambda \choose t}$ ways to choose $X$ so that $(X,Y) \in S_v^{+}$.
	Therefore \[|S_v^{+}|= {n-1 \choose t-1}{\lambda \choose t}.\]
	Hence, 
	\[{d^{+}(v) \choose t}{\lambda-1 \choose t-1}={n-1 \choose t-1}{\lambda \choose t}. \]
	By Proposition~\ref{prop:converse}, we may apply the above argument to the converse of $D$ to obtain the equation
	\[{d^{-}(v) \choose t}{\lambda-1 \choose t-1}={n-1 \choose t-1}{\lambda \choose t}. \]
	Thus	\[{d^{+}(v) \choose t}={d^{-}(v) \choose t}=\frac{{n-1 \choose t-1}{\lambda \choose t}}{{\lambda-1 \choose t-1}},  \]
	which implies that $d^+(v)$ and $d^-(v)$ are equal and expressed only in terms of $n$, $\lambda$, $t$.
	Therefore $d^+(v)=d^-(v)=k$ for some positive integer $k$ satisfying  	\[{n-1 \choose t-1}{\lambda \choose t}={k \choose t}{\lambda-1 \choose t-1}. \]
	Since $v$ was arbitrarily chosen, $D$ is $k$-diregular.\end{proof}
 	
 Lemma~\ref{lem:2-diregular} directly implies that for an integer $\lambda \ge 2$, a two-way $(2,\lambda)$-liking digraph is $k$-diregular for some positive integer $k$ satisfying $\lambda(n-1)=k(k-1)$. 
 Thus Theorem~\ref{thm:main1} is valid.
 We further study the existence of a $k$-diregular two-way $(2, \lambda)$-liking digraph by using an $(n,k,\lambda)$-SBIBD.

\begin{Prop}\label{prop:design implies}
	A $k$-diregular two-way $(2, \lambda)$-liking digraph of order $n$ exists if there is an $(n,k,\lambda)$-SBIBD with $n\ge 2\lambda$.
\end{Prop}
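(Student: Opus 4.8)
The plan is to realize the desired digraph as an incidence structure between the points and the blocks of the given $(n,k,\lambda)$-SBIBD. Write the point set as $\{1,\dots,n\}$ and let $B_1,\dots,B_n$ be the blocks. I would look for a bijection $\phi$ from the points to the blocks with the property that $p \notin \phi(p)$ for every point $p$, and then define a digraph $D$ on vertex set $\{1,\dots,n\}$ by declaring $(p,q)$ to be an arc if and only if $q \in \phi(p)$. The condition $p \notin \phi(p)$ is exactly what guarantees that $D$ has no loops.

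Granting such a $\phi$, the verification that $D$ is a $k$-diregular two-way $(2,\lambda)$-liking digraph uses only the standard parameters of an SBIBD. Indeed $N^+(p)=\phi(p)$ has size $k$, and for distinct $p,p'$ the set $N^+(p)\cap N^+(p')=\phi(p)\cap\phi(p')$ is the intersection of two \emph{distinct} blocks, which has exactly $\lambda$ elements. Dually, since $\phi$ is a bijection, $N^-(q)$ is the $\phi$-preimage of the set of blocks containing $q$, of which there are $k$, and $N^-(q)\cap N^-(q')$ corresponds to the blocks containing both $q$ and $q'$, of which there are exactly $\lambda$. Thus every two vertices share exactly $\lambda$ common out-neighbors and exactly $\lambda$ common in-neighbors, and $D$ is $k$-diregular.

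The heart of the matter is producing the bijection $\phi$, that is, a system of distinct representatives for the family $\mathcal{A}=(A_p)_p$ in which $A_p$ is the set of blocks not containing $p$; this is where I would invoke Hall's Marriage Theorem (Theorem~\ref{thm:MC}). For a set $T$ of points let $m(T)$ denote the number of blocks containing all of $T$; then $\left|\bigcup_{p\in T}A_p\right| = n - m(T)$, so Hall's condition reads $m(T)\le n-|T|$ for every $T$. I expect checking this inequality to be the main obstacle, and I would run the argument by cases on $|T|$, driven by two facts: any two distinct blocks meet in exactly $\lambda$ points, and any pair of points lies in exactly $\lambda$ common blocks.

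Concretely, for $|T|=1$ one has $m(T)=k\le n-1$; for $2\le |T|\le \lambda$ the blocks containing $T$ all contain a fixed pair from $T$, so $m(T)\le \lambda$ and hence $m(T)+|T|\le 2\lambda\le n$, which is precisely where the hypothesis $n\ge 2\lambda$ enters; and for $|T|>\lambda$ two distinct blocks can no longer both contain all of $T$ (their intersection would exceed $\lambda$), so $m(T)\le 1$ and $m(T)+|T|\le n$ follows from $|T|\le n$ together with $k\le n-1$. Thus Hall's condition holds across all ranges, the required SDR exists, and the construction above yields the desired $k$-diregular two-way $(2,\lambda)$-liking digraph.
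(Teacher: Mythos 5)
Your proposal is correct and takes essentially the same approach as the paper: both construct the digraph from the point--block incidence of the SBIBD via a system of distinct representatives obtained from Hall's theorem, with the same case analysis ($|T|=1$, $2\le|T|\le\lambda$ using $n\ge 2\lambda$, and $|T|>\lambda$ forcing at most one common block). The only cosmetic difference is that you check Hall's condition on the points' side rather than the blocks' side and orient the arcs in the converse direction, which yields the converse of the paper's digraph and changes nothing.
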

\begin{proof}
	Suppose that there is an $(n,k,\lambda)$-SBIBD with $n\ge 2\lambda$.
	By one of well-known facts for the parameters of a symmetric design,
	\[\lambda(n-1)=k(k-1). \]
	By the definition of a balanced incomplete block design,
	\begin{equation}\label{eq:n,k}
		k<n.
	\end{equation}
	Let $V=\{v_1, \ldots, v_{n}\}$ be the set of varieties of the $(n,k,\lambda)$-SBIBD and $\mathcal{B}=\{B_1, \ldots, B_{n}\}$ be the set of blocks of the $(n,k,\lambda)$-SBIBD.
	\begin{claim1} 
	The collection $\{V-B_1, \ldots, V-B_n\}$ has a system of distinct representatives.
	\end{claim1}
	\begin{proof}[Proof of Claim A]
	We will show that $\{V-B_1, \ldots, V-B_n\}$ satisfies Hall's marriage condition given in Theorem~\ref{thm:MC}.
	To this end, take an arbitrary nonempty subset $S \subseteq [n]$.
	Since 
	\[\left| \bigcup_{s \in S} (V-B_s) \right|=\left|V- \bigcap_{s \in S} B_s \right|, \]
	we only need to show 
	\[\left|\bigcap_{s \in S} B_s \right| \le n-|S|. \]
	If $|S|=1$, then, by \eqref{eq:n,k}, \[\left|\bigcap_{s \in S} B_s \right|=|B_i|=k\le n-1=n-|S|\]
	for some $i\in [n]$.
	If $1< |S| \le \lambda$, then, by the hypothesis that $n\ge 2\lambda$, \[\left|\bigcap_{s \in S} B_s \right|
	\le |B_i \cap B_j|=\lambda \le n-\lambda \le n-|S|\]
	for $\{i,j\} \subseteq S$ where the second inequality holds by the fact that any two blocks have exactly $\lambda$ varieties in common.
	If $\lambda<|S| \le k$,
	then, by \eqref{eq:n,k}, \[\left|\bigcap_{s \in S} B_s \right| \le  1 \le n-k \le n-|S|,\]
	where the first inequality holds by the fact that each pair of varieties simultaneously appears in exactly $\lambda$ blocks.
	If $k<|S| $,
	then \[\left|\bigcap_{s \in S} B_s \right|=0 \le n-|S|,\]
	where the first equality is true by the fact that each variety is contained in exactly $k$ blocks.
	Thus, no matter what the size of $S$ is, the following holds:
	\[\left|\bigcap_{s \in S} B_s \right| \le n-|S|. \]
	Therefore Hall's marriage condition is satisfied and so $\{V-B_1, \ldots, V-B_{n}\}$ has a system of distinct representatives by Theorem~\ref{thm:MC}.
	\end{proof}
By the above claim, $\{V-B_1, \ldots, V-B_{n}\}$ has a system of distinct representatives $\{v_{i_1}, \ldots, v_{i_n}\}$.
	Now, we consider the digraph $D$ with the vertex set $V$ and the arc set
	\[\bigcup_{t=1}^{n} \{(v,v_{i_t}) \colon\, v\in B_t\}.\]
	Since $v_{i_t}\not\in B_t$ for each $1\le t \le n$, $D$ is loopless.
	Further, since any two varieties in $V$ are contained in exactly $\lambda$ blocks, any two vertices in $D$ have precisely $\lambda$ common out-neighbors.
	Since any two blocks have exactly $\lambda$ varieties in common and $V(D)=\{v_{i_1}, \ldots, v_{i_n}\}$, any two vertices in $D$ have precisely $\lambda$ common in-neighbors.
	Thus $D$ is a two-way $(2,\lambda)$-liking digraph of order $n$. 
	Since each block has size $k$ and the design is symmetric, $D$ is $k$-diregular.
	\end{proof}

Now, we go further to derive more results for proving Theorem~\ref{thm:main2}.

\begin{Lem}\label{lem:lambda+1}
Let $D$ be a two-way $(\lambda+1,\lambda)$-liking digraph for some integer $\lambda \ge 2$.
	If $(u,v) \not \in A(D)$ or $(v,u) \not \in A(D)$ for some two vertices $u$ and $v$, then $d^+(u) = d^+(v)$.
\end{Lem}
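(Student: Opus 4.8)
The plan is to reduce to one clean configuration and then convert the ``$(\lambda+1)$-wise'' hypothesis into an honest pairwise (quadratic) condition that linear algebra can finish. Since $D$ is in particular a $(t,\lambda)$-liking digraph with $t=\lambda+1$, hence $t\ge\lambda+1$, Proposition~\ref{prop:eulerian} gives $d^+(w)=d^-(w)$ for every vertex $w$; together with Proposition~\ref{prop:converse} (the converse is again two-way $(t,\lambda)$-liking and swaps $d^+$ with $d^-$) this lets me assume $(u,v)\notin A(D)$, i.e.\ $v\notin N^+(u)$. The obvious first move is double counting: for a $\lambda$-set $W$ every $(\lambda+1)$-set $W\cup\{x\}$ has exactly $\lambda$ common out-neighbors while $W$ itself has at least $\lambda+1$ (Proposition~\ref{prop:out-degree}), and counting incidences $(x,w)$ with $x\notin W$, $w\in\bigcap_{s\in W}N^+(s)$ and $x\not\to w$ yields the uniform identity
\[\sum_{w\in \bigcap_{s\in W}N^+(s)} d^-(w)=\lambda(n-\lambda)+\lambda\left|\bigcap_{s\in W}N^+(s)\right|\]
for every $\lambda$-set $W$; applied to $W=S\cup\{u\}$ and $W=S\cup\{v\}$ over $(\lambda-1)$-sets $S\subseteq V\setminus\{u,v\}$ the shared core cancels and the ``$u$-private'' and ``$v$-private'' out-neighbors get compared. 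The main obstacle is precisely that such identities only ever produce \emph{degree-weighted} comparisons of $N^+(u)$ and $N^+(v)$, never the bare cardinalities $d^+(u)$ and $d^+(v)$, so pure counting stalls.

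I would break the deadlock with a Fisher-type positive-definiteness argument, which in fact needs none of the reductions above. Fix any $(\lambda-1)$-set $S$ and let $U=\bigcap_{s\in S}N^+(s)$ be its set of common out-neighbors. For each $x\in V\setminus S$ record the $\{0,1\}$-vector $\mathbf a_x\in\RR^{U}$ indicating $N^+(x)\cap U$. For distinct $x,y\in V\setminus S$ the set $S\cup\{x,y\}$ has size $\lambda+1$, so $\langle \mathbf a_x,\mathbf a_y\rangle=|U\cap N^+(x)\cap N^+(y)|=\lambda$, while $S\cup\{x\}$ has size $\lambda$, so $\langle \mathbf a_x,\mathbf a_x\rangle=|U\cap N^+(x)|\ge \lambda+1$ by Proposition~\ref{prop:out-degree}. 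Thus the Gram matrix of $\{\mathbf a_x\}_{x\in V\setminus S}$ is $\lambda(J-I)+\mathrm{diag}(r_x)$ with every $r_x\ge\lambda+1$, whence $\mathbf z^{\top}G\mathbf z=\lambda(\mathbf 1^{\top}\mathbf z)^2+\sum_x(r_x-\lambda)z_x^2>0$ for $\mathbf z\ne 0$; that is, $G$ is positive \emph{definite}.

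Positive definiteness forces $\mathrm{rank}(G)=|V\setminus S|=n-\lambda+1$, but as a Gram matrix its rank is at most $\dim\RR^{U}=|U|\le|V\setminus S|$, so $|U|=n-\lambda+1$, i.e.\ $\bigcap_{s\in S}N^+(s)=V\setminus S$. Letting $S$ vary over all $(\lambda-1)$-sets shows every vertex has every other vertex as an out-neighbor, so $D\cong\olr{K}_n$ and in particular $d^+(u)=d^+(v)=n-1$ — already much more than the statement asks, and independent of the missing-arc hypothesis. So the linear-algebra step is where all the work happens and it even yields completeness; the delicate point to verify carefully is the exact shape of the Gram matrix, namely that the off-diagonal entries are \emph{exactly} $\lambda$ (requiring $x\ne y$ and $x,y\notin S$) and that the diagonal entries \emph{strictly} exceed $\lambda$, since it is this strictness $r_x>\lambda$ that guarantees nonsingularity and drives the rank contradiction.
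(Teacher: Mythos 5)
Your Gram-matrix argument is correct, but it is a genuinely different — and substantially stronger — route than the paper's. The paper proves the lemma by a local injection: assuming $(u,v)\notin A(D)$, it sends each $\lambda$-subset $S\subseteq N^+(u)$ to the $\lambda$-set of common in-neighbors of $S\cup\{v\}$ (which excludes $u$), checks injectivity from the liking property, deduces $\binom{d^+(u)}{\lambda}\le\binom{d^-(v)}{\lambda}$, and then symmetrizes via Proposition~\ref{prop:eulerian} and the converse digraph; this uses both the two-way hypothesis and the missing arc in an essential way, and yields only the degree equality. Your proof is the non-uniform (Majumdar-type) Fisher inequality applied to the sets $B_x=N^+(x)\cap U$ for $x\in V\setminus S$: the off-diagonal entries are exactly $\lambda$ because $|S\cup\{x,y\}|=\lambda+1$, the diagonal entries are at least $\lambda+1$ by Proposition~\ref{prop:out-degree} with $i=1$, positive definiteness forces $n-\lambda+1=\mathrm{rank}(G)\le|U|$, and since $D$ is loopless no $s\in S$ lies in $U$, so $|U|\le|V\setminus S|=n-\lambda+1$ and hence $U=V\setminus S$. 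I verified each of these steps; the one point you should make explicit is precisely that last containment $U\subseteq V\setminus S$, which rests on the no-loops convention ($s\notin N^+(s)$) and is what makes the dimension count close. Note what your argument buys: it never uses the in-neighbor (two-way) hypothesis, never uses the missing-arc hypothesis, and concludes $D\cong\olr{K}_{2\lambda+1}$ outright — so it subsumes not only this lemma but also Proposition~\ref{prop:lambda+1_complete} and (running the same computation with a $(t-2)$-set $S$) would show that every one-way $(t,\lambda)$-liking digraph with $t\ge 3$ is complete, strictly more than Theorem~\ref{thm:lambda+1_complete} claims. The trade-off is that the paper's proof is elementary counting with no linear algebra, while yours imports a Fisher-type rank argument; given how much your version proves, it deserves to be checked once more against the known non-complete examples (all of which have $t=2$, where your $S$ would be too small for the dimension count to bite — consistent with correctness).
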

\begin{proof}
	Without loss of generality, we may assume that there are two vertices $u$ and $v$ with $(u,v)\not \in A(D)$.
	That is, \[u \not\in N^-(v)\quad \text{and} \quad v \not\in N^+(u).\]
	Take a $\lambda$-subset $S$ in $N^+(u)$.
	Since $v \not\in N^+(u)$, $|S\cup \{v\}|=\lambda+1$.
	Thus there is a unique $\lambda$-subset $T$ in $N^-(v)$ in which each vertex is a common in-neighbor of the vertices in $S\cup \{v\}$.
	Since $u \not\in N^-(v)$, $u \not\in T$.
	We let $f(S)=T$.
	Then $f$ is a function from the family $\mathcal{F}^+$ of $\lambda$-subsets in $N^+(u)$ to the family $\mathcal{F}^-$ of $\lambda$-subsets in $N^-(v)$.
	Suppose that $f(S')=T$ for a $\lambda$-subset $S'$ in $N^+(u)$ distinct from $S$.
	Then the vertices in $T\cup \{u\}$ are common in-neighbors of the vertices in $S\cup S'$.
	However, $|S\cup S'| \ge \lambda+1$ and $|T\cup \{u\}|=\lambda+1$, so we reach a contradiction to the fact that $D$ is a two-way $(\lambda+1,\lambda)$-liking digraph.
	Hence $f$ is one-to-one.
	Thus we have 
	\[{d^+(u) \choose \lambda}=|\mathcal{F}^+| \le |\mathcal{F}^-|={d^-(v) \choose \lambda} \]
	and so $d^+(u) \le d^-(v)$.
	Since a two-way $(\lambda+1,\lambda)$-liking digraph is a $(\lambda+1,\lambda)$-liking digraph, $d^-(v)=d^+(v)$ by Proposition~\ref{prop:eulerian}. 
	Therefore \[d_D^+(u) \le d_D^+(v).\] 
	
	Now, we consider the converse $D^{\leftarrow}$ of $D$.
	Then $(v,u) \not \in A(D^{\leftarrow})$.
	By Proposition~\ref{prop:converse}, $D^{\leftarrow}$ is a two-way $(\lambda+1, \lambda)$-liking digraph.
	Thus we may apply the same argument as above to obtain 
	 \[d_{D^{\leftarrow}}^+(v) \le d_{D^{\leftarrow}}^+(u),\]
	i.e.
	 \[d_{D}^-(v) \le d_{D}^-(u).\]
	Therefore $d_{D}^+(v) \le d_{D}^+(u)$ by Proposition~\ref{prop:eulerian}.
	Hence $d_D^+(u)=d_D^+(v)$. 
\end{proof}

Given a digraph $D$ of order $n$, the {\it complement} of $D$, denoted by $\overline{D}$, is the digraph with $V(\overline{D})=V(D)$ and $A(\overline{D})=A(\overleftrightarrow{K}_{n})-A(D)$.

\begin{Prop}\label{prop:lambda+1_complete}
The complete digraph on $2\lambda+1$ vertices is the only two-way $(\lambda+1,\lambda)$-liking digraph for an integer $\lambda \ge 2$.
\end{Prop}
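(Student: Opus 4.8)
The plan is to collapse the statement to a diregularity claim and then feed it to the earlier structure theorems. First I would observe that $D$, being in particular a $(\lambda+1,\lambda)$-liking digraph with $t=\lambda+1\ge\lambda+1$, satisfies $d^+(v)=d^-(v)$ for every $v$ by Proposition~\ref{prop:eulerian}; hence ``$D$ is diregular'' is the same as ``all out-degrees of $D$ are equal''. If I can show $D$ is diregular, then since $t=\lambda+1\ge 3$ the equivalence (a)$\Leftrightarrow$(e) of Theorem~\ref{thm:prev_main2} makes $D$ complete, and in a complete two-way $(\lambda+1,\lambda)$-liking digraph every $\lambda+1$ vertices share exactly $n-(\lambda+1)$ common out-neighbors, forcing $n-(\lambda+1)=\lambda$, i.e. $D\cong\olr{K}_{2\lambda+1}$. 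So the entire task reduces to proving that $D$ is diregular.

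Assume for contradiction that $D$ is not diregular and partition $V(D)$ into its out-degree classes. The engine is the contrapositive of Lemma~\ref{lem:lambda+1}: two vertices of different out-degree must form a digon. Consequently every vertex $v$ of a class $C$ sends and receives an arc to each vertex outside $C$, so $N^+(v)\supseteq V(D)\setminus C$, and every non-arc of $D$ joins two vertices lying in a common class. I record one reduction: if some vertex had out-degree $n-1$, then the equivalence (a)$\Leftrightarrow$(d) of Theorem~\ref{thm:prev_main2} (available since $(t,\lambda)\ne(2,1)$) would make $D$ complete, hence diregular, a contradiction; so I may assume no vertex is dominating.

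Next I would exploit two recursions obtained by restricting the liking condition. Fix a class $C$. Since every vertex outside $C$ dominates $C$, any $\lambda+1$ vertices chosen inside $V(D)\setminus C$ have all of $C$ among their common out-neighbors, so $D[V(D)\setminus C]$ is a two-way $(\lambda+1,\lambda-|C|)$-liking digraph; symmetrically, if $|C|\ge\lambda+1$ then $D[C]$ is a two-way $(\lambda+1,\,|C|-(n-\lambda))$-liking digraph. In both cases the new second parameter is at most $\lambda-1$, so the first parameter $\lambda+1$ exceeds it by at least $2$, and Theorem~\ref{thm:prev_main1} forces the induced digraph to be complete. Comparing orders then pins down $n=2\lambda+1$ and, because the relevant induced digraph is complete, produces a vertex of out-degree $n-1$, contradicting the reduction above. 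This eliminates every class whose size is different from $\lambda$, apart from the boundary values of the induced parameter.

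The main obstacle is precisely the residual configuration the recursions cannot reach, namely when the induced second parameter degenerates to $0$: either all classes have size exactly $\lambda$, or there is a single large class of the critical size $n-\lambda$ accompanied by one class of size $\lambda$. Here I would argue by a direct double count: the common out-neighbors of one entire class together with a single extra vertex $z$ can be read off (using that all cross-class pairs are digons) purely in terms of the within-class out-degree of $z$ and the class sizes. Forcing this to equal $\lambda$ for every choice either collapses all out-degrees to the single value $2\lambda$, which is impossible once two distinct classes exist, or, in the large-class case, combines with Proposition~\ref{prop:outdegree} applied to the induced diregular $(\lambda,\,\cdot)$-liking digraph on the large class to show that the small class is dominating, again a contradiction. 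Executing this exact count cleanly, and verifying that it covers both degenerate shapes, is the delicate part of the argument.
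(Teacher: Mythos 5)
Your proposal is correct in outline and, after checking the two residual configurations you flag as ``delicate,'' I believe it does close; but it is a genuinely different execution from the paper's. Both proofs run on the same engine --- Lemma~\ref{lem:lambda+1} in contrapositive form (vertices of unequal out-degree must form a digon), followed by Theorem~\ref{thm:prev_main2} once diregularity or a dominating vertex is in hand, and Theorem~\ref{thm:prev_main1} applied to induced sub-digraphs --- but the decompositions differ. The paper splits on whether the complement $\overline{D}$ is weakly connected: in the connected case Lemma~\ref{lem:lambda+1} propagates along paths of $U(\overline{D})$ to give diregularity outright, and in the disconnected case a single bipartition $X\cup Y$ with $|Y|\le\lambda$ suffices, the endgame being a count of $|N^+(x)\cap N^+(y')|$ for one non-arc $(y',y)$ in $Y$ against the bound $2\lambda-1$ from Proposition~\ref{prop:out-degree}. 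You instead partition into out-degree classes, which forces you to rule out several class-size profiles and leaves two degenerate shapes; your closing arguments there do work --- for all classes of size $\lambda$ the identity $(n-2\lambda)+|N^+(z)\cap C_z|+|\bigcap_{c\in C}N^+(c)\cap C|=\lambda$ forces $n=3\lambda$ and all out-degrees equal to $2\lambda$ (or an immediate overshoot of common out-neighbors), and in the large-class case $D[X]$ is a $(\lambda,\lambda-\beta)$-liking digraph with $\beta=|N^+(y)\cap Y|\le\lambda-1$, so Proposition~\ref{prop:outdegree} gives $\lambda\ge\delta^+(D[X])\ge2\lambda-\beta-1$, hence $\beta=\lambda-1$ and every vertex of $Y$ dominates, as you predicted. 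The paper's route buys economy (one bipartition, one short count); yours buys a more uniform ``forced-digon multipartite'' picture at the cost of a longer case analysis. One small point to make explicit when writing it up: classes $C$ with $\lambda+1\le|C|\le n-\lambda-1$ are killed not by Theorem~\ref{thm:prev_main1} but by the immediate observation that $\lambda+1$ vertices outside $C$ would have more than $\lambda$ common out-neighbors.
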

\begin{proof}
	Let $D$ be a two-way $(\lambda+1,\lambda)$-liking digraph for some integer $\lambda \ge 2$.
	We first consider the case in which the complement $\overline{D}$ of $D$ is weakly connected.
	Take two vertices $u$ and $v$.
	Since $\overline{D}$ is weakly connected, there is a $(u,v)$-path $P$ in $U(\overline{D})$ where $U(\overline{D})$ denotes the underlying graph of $\overline{D}$.
	This implies that for any two consecutive vertices $x$ and $y$ on $P$, $(x,y) \not\in A(D)$ or $(y,x)\not\in A(D)$.
	Thus we may apply Lemma~\ref{lem:lambda+1} to conclude that $d^+(u)=d^+(v)$.
	By the way, by Proposition~\ref{prop:eulerian}, $d^-(u)=d^+(u)=d^+(v)=d^-(v)$.
	Since $u$ and $v$ were arbitrarily chosen, we may conclude that $D$ is diregular.
	Therefore $D$ is the complete digraph on $2\lambda+1$ vertices by Theorem~\ref{thm:prev_main2}$(e)\Rightarrow (a)$.
	
	Now we consider the case in which the complement $\overline{D}$ of $D$ is not weakly connected.
	Then the vertex set $V(D)$ can be partitioned into two subsets $X$ and $Y$ such that there is no arc in $\overline{D}$ between a vertex in $X$ and a vertex in $Y$.
	Thus for any $x\in X$ and $y \in Y$,
	\begin{equation}\label{eq:2}
		(x,y)\in A(D) \quad \text{and} \quad (y,x) \in A(D). 
	\end{equation}
	Furthermore, since $|V(D)|\ge 2\lambda+1$ by Proposition~\ref{prop:outdegree} and $V(D)=X \cup Y$, one of $X$ and $Y$ has at least $\lambda+1$ vertices.
	Without loss of generality, we may assume that $|X| \ge \lambda+1$.
	Then any $\lambda+1$ vertices in $X$ have the vertices in $Y$ as common out-neighbors by \eqref{eq:2}.
	Thus, since $D$ is a two-way $(\lambda+1,\lambda)$-liking digraph, \begin{equation}\label{eq:Ysize}
		|Y| \le \lambda.
	\end{equation}
	Moreover, any $\lambda+1$ vertices in $X$ have exactly $\lambda-|Y|$ common out-neighbors in $X$.
	Then $D[X]$ is a $(\lambda+1, \lambda-|Y|)$-liking digraph.

	To the contrary, suppose that neither $D[X]$ nor $D[Y]$ are complete.
		If $1\le |Y| <\lambda$, then \[\lambda+1 \ge (\lambda-|Y|)+2\] 
		and so the $(\lambda+1, \lambda-|Y|)$-liking digraph $D[X]$ is complete by Theorem~\ref{thm:prev_main1}, which is a contradiction.
	Thus $|Y| \ge \lambda$.
	Then $|Y|=\lambda$ by \eqref{eq:Ysize}.
	Take a vertex $x$ in $X$.
	Then, since \[N^+(x) \cap \bigcap_{y\in Y} N^+(y)=N^+(x)\cap X\] by \eqref{eq:2} and $|\{x\} \cup Y|=\lambda+1$, 
	\[|N^+(x) \cap X|=\lambda. \] 
	Since $D[Y]$ is not complete, there are vertices $y$ and $y'$ in $Y$ such that \[(y',y) \not \in A(D).\]	
	Then 
	\begin{align*}\label{eq:4}
		N^+(x) \cap N^+(y') 
		&= (N^+(x) \cap N^+(y') \cap X) \cup  (N^+(x) \cap N^+(y') \cap Y) \notag \\
		&= (N^+(x) \cap X) \cup  (N^+(y') \cap Y) \notag \\ 
		&\subseteq (N^+(x) \cap X) \cup  (Y\setminus\{y,y'\}). 	
	\end{align*}
	where the second equality holds by \eqref{eq:2}, which implies $X \subseteq N^+(y')$ and $Y\subseteq N^+(x)$.
	Thus \[|N^+(x) \cap N^+(y')| \le |N^+(x) \cap X|+|Y\setminus\{y,y'\}|=\lambda+(\lambda-2)=2\lambda-2. \]
	However, by Proposition~\ref{prop:out-degree}, $x$ and $y'$ have at least $2\lambda-1$ common out-neighbors.
	Therefore we have reached a contradiction.
	Thus $D[X]$ or $D[Y]$ is complete.
	Then there is a vertex in $D$ having outdegree $|V(D)|-1$ by \eqref{eq:2} and so $D$ is the complete digraph on $2\lambda+1$ vertices by Theorem~\ref{thm:prev_main2}$(d)\Rightarrow(a)$.
	\end{proof}

\begin{proof}[Proof of Theorem~\ref{thm:main2}]
Let $D$ be a two-way $(t,\lambda)$-liking digraph with $t\ge 3$.
If $\lambda \ge t$, then $D$ is diregular by Lemma~\ref{lem:2-diregular} and so $D$ is the complete digraph on $t+\lambda$ vertices by Theorem~\ref{thm:prev_main2}$(e)\Rightarrow(a)$.
If $t\ge \lambda+2$, then $D$ is the complete digraph on $t+\lambda$ vertices by Theorem~\ref{thm:prev_main1}.
If $t=\lambda+1$, then $D$ is the complete digraph on $t+\lambda$ vertices by
Proposition~\ref{prop:lambda+1_complete}.
Therefore we may conclude that $D$ is the complete digraph on $t+\lambda$ vertices.
\end{proof}

\section{An extension of results on $(t,\lambda)$-liking digraphs}

\begin{Prop}\label{prop:lambda+1_converse}
	A $(\lambda+1,\lambda)$-liking digraph is a two-way $(\lambda+1,\lambda)$-liking digraph for a positive integer $\lambda$.
\end{Prop}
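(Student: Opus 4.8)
The plan is to prove the missing direction: assuming every $\lambda+1$ vertices have exactly $\lambda$ common out-neighbors, show that every $\lambda+1$ vertices have exactly $\lambda$ common in-neighbors. I would split this into an elementary ``at most $\lambda$'' bound that holds set-by-set, followed by a global double-counting identity that upgrades the bound to equality. The pivotal structural fact I would invoke at the outset is that $d^+(v)=d^-(v)$ for every vertex $v$: since a $(\lambda+1,\lambda)$-liking digraph has $t=\lambda+1\ge \lambda+1$, Proposition~\ref{prop:eulerian} applies. This degree symmetry is what lets me transfer the known out-neighbor count into the desired in-neighbor count, and I expect it to be the real engine of the proof.

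For the upper bound I would argue by contradiction. Fix a $(\lambda+1)$-subset $W$ and suppose it has at least $\lambda+1$ common in-neighbors; select $\lambda+1$ of them to form a set $Z$. Because $D$ is loopless and each vertex of $Z$ dominates each vertex of $W$, the sets $Z$ and $W$ are disjoint, and every vertex of $W$ is then a common out-neighbor of $Z$. This would give the $(\lambda+1)$-set $Z$ at least $\lambda+1$ common out-neighbors, contradicting the $(\lambda+1,\lambda)$-liking hypothesis. Hence every $(\lambda+1)$-subset of $V(D)$ has at most $\lambda$ common in-neighbors.

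To force equality I would double count incidences. Write $c^+(W)$ and $c^-(W)$ for the numbers of common out- and in-neighbors of a $(\lambda+1)$-subset $W$ (note $n\ge 2\lambda+1$ by Proposition~\ref{prop:outdegree}, so such subsets exist). Counting pairs $(W,u)$ with $u$ a common out-neighbor of $W$ in two ways, and using $c^+(W)=\lambda$ for every $W$, yields
\[\sum_{u\in V(D)}\binom{d^-(u)}{\lambda+1}=\sum_{W}c^+(W)=\lambda\binom{n}{\lambda+1}.\]
Counting pairs $(W,z)$ with $z$ a common in-neighbor of $W$ gives
\[\sum_{z\in V(D)}\binom{d^+(z)}{\lambda+1}=\sum_{W}c^-(W).\]
Since $d^+(v)=d^-(v)$ at every vertex, the two left-hand sides are identical, so $\sum_{W}c^-(W)=\lambda\binom{n}{\lambda+1}$. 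Combining this with the set-by-set bound $c^-(W)\le \lambda$: a sum of $\binom{n}{\lambda+1}$ terms, each at most $\lambda$, attains the maximal value $\lambda\binom{n}{\lambda+1}$, so every term equals $\lambda$. Therefore each $(\lambda+1)$-subset has exactly $\lambda$ common in-neighbors, and $D$ is a two-way $(\lambda+1,\lambda)$-liking digraph.

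The genuinely delicate step is the exactness, not the upper bound: the local contradiction argument only rules out ``too many'' common in-neighbors, and some $W$ could a priori have fewer than $\lambda$. The obstacle is bridged by the averaging identity, whose validity hinges entirely on the equality $d^+=d^-$ supplied by Proposition~\ref{prop:eulerian}; without it the in-neighbor total could not be pinned to $\lambda\binom{n}{\lambda+1}$. I would take care to verify that the incidence counts are set up on $(\lambda+1)$-subsets (matching $t=\lambda+1$) so that $\binom{d^\pm(v)}{\lambda+1}$ correctly enumerates the relevant subsets of each neighborhood, and the argument then works uniformly for every positive integer $\lambda$.
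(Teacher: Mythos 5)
Your proof is correct and follows essentially the same route as the paper: establish the pointwise bound that every $\lambda+1$ vertices have at most $\lambda$ common in-neighbors, then use the double-counting identity $\sum_{W}c^-(W)=\sum_{v}\binom{d^+(v)}{\lambda+1}=\sum_{v}\binom{d^-(v)}{\lambda+1}=\sum_{W}c^+(W)$ (via $d^+=d^-$ from Proposition~\ref{prop:eulerian}) to force equality everywhere. Your explicit justification of the ``at most $\lambda$'' bound is a detail the paper leaves implicit, but the argument is otherwise the same.
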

\begin{proof}
	Let $D$ be a $(\lambda+1,\lambda)$-liking digraph for a positive integer $\lambda$.
	To prove that $D$ is a two-way $(\lambda+1, \lambda)$-liking digraph, we first show that
	\[ \sum_{\substack{S \subseteq V(D) \\ |S|=\lambda+1}} \left|\bigcap_{v\in S}N^-(v)\right|=\sum_{\substack{S \subseteq V(D) \\ |S|=\lambda+1}} \lambda.  \]
	Noting that 
	\[\left|\bigcap_{v\in S}N^-(v)\right|=\sum_{w\in \bigcap_{v\in S}N^-(v)} 1, \]
	for a vertex set $S$ of $D$,
	we may change the order of the double summation as follows:
	\[\sum_{\substack{S \subseteq V(D) \\ |S|=\lambda+1}} \left|\bigcap_{v\in S}N^-(v)\right| =\sum_{w\in V(D)}\sum_{\substack{S \subseteq N^+(w) \\ |S|=\lambda+1}} 1.\]
	By Proposition~\ref{prop:eulerian}, $d^+(w)=d^-(w)$ for each vertex $w$ in $D$ and so
	\[\sum_{w\in V(D)}\sum_{\substack{S \subseteq N^+(w) \\ |S|=\lambda+1}} 1=\sum_{w\in V(D)}\sum_{\substack{S \subseteq N^-(w) \\ |S|=\lambda+1}} 1 .\]
	By changing the order of the double summation, we obtain
	\[\sum_{w\in V(D)}\sum_{\substack{S \subseteq N^-(w) \\ |S|=\lambda+1}} 1=\sum_{\substack{S \subseteq V(D) \\ |S|=\lambda+1}} \left|\bigcap_{v\in S}N^+(v)\right|. \]
	Since $D$ is a $(\lambda+1,\lambda)$-liking digraph, 
	\[\sum_{\substack{S \subseteq V(D) \\ |S|=\lambda+1}} \left|\bigcap_{v\in S}N^+(v)\right|=\sum_{\substack{S \subseteq V(D) \\ |S|=\lambda+1}} \lambda. \]
	Accordingly, we have shown that 
	\[ \sum_{\substack{S \subseteq V(D) \\ |S|=\lambda+1}} \left|\bigcap_{v\in S}N^-(v)\right|=\sum_{\substack{S \subseteq V(D) \\ |S|=\lambda+1}} \lambda .  \]	
	By the way, since $D$ is a $(\lambda+1,\lambda)$-liking digraph,
	 for any ($\lambda+1$)-subset $S$ of $V(D)$, 
	\[\left|\bigcap_{v\in S}N^-(v)\right| \le \lambda. \]
	Therefore, for any ($\lambda+1$)-subset $S$ of $V(D)$, 	\[\left|\bigcap_{v\in S}N^-(v)\right| = \lambda. \]
	That is, any $\lambda+1$ vertices in $D$ have exactly $\lambda$ common in-neighbors.
	Hence, $D$ is a two-way $(\lambda+1,\lambda)$-liking digraph.
\end{proof}

In \cite{choi2023digraph} and \cite{choi2024generalizedlikingdigraph}, it was shown that when $t$ and $\lambda$ satisfy $(t,\lambda)=(2,1)$ or $t\ge \lambda+2$, a digraph is a $(t,\lambda)$-liking digraph if and only if it is a two-way $(t,\lambda)$-liking digraph.
Thus Proposition~\ref{prop:lambda+1_converse} extends the results on $(t,\lambda)$-liking digraphs as follows.

\begin{Cor}\label{cor:lambda+1_twoway}
	Let $t$ and $\lambda$ be positive integers satisfying $t\ge \lambda+1$. 
	Then a digraph is a $(t,\lambda)$-liking digraph if and only if it is a two-way $(t,\lambda)$-liking digraph.
\end{Cor}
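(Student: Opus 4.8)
The plan is to prove the two directions separately, with the nontrivial (``only if'') direction handled by splitting into the boundary case $t=\lambda+1$ and the strict case $t\ge\lambda+2$, each of which is already settled by an earlier result.

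First I would dispatch the easy direction. If a digraph $D$ is a two-way $(t,\lambda)$-liking digraph, then by definition every $t$ vertices have exactly $\lambda$ common out-neighbors; this is precisely one half of the two-way condition, so $D$ is a $(t,\lambda)$-liking digraph. This implication needs no hypothesis beyond $t$ and $\lambda$ being positive integers.

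For the converse, suppose $D$ is a $(t,\lambda)$-liking digraph with $t\ge\lambda+1$, and treat the two possibilities for $t$ separately. When $t=\lambda+1$, the assertion is exactly Proposition~\ref{prop:lambda+1_converse}: a $(\lambda+1,\lambda)$-liking digraph is automatically a two-way $(\lambda+1,\lambda)$-liking digraph, so nothing new is required here. When $t\ge\lambda+2$, I would invoke Theorem~\ref{thm:prev_main1}, which forces $D\cong\olr{K}_{t+\lambda}$. It then remains only to observe that the complete digraph on $t+\lambda$ vertices is itself a two-way $(t,\lambda)$-liking digraph: any $t$ of its vertices point to, and are pointed to by, all of the remaining $t+\lambda-t=\lambda$ vertices, so they have exactly $\lambda$ common out-neighbors and exactly $\lambda$ common in-neighbors. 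Combining the two cases establishes the converse for every $t\ge\lambda+1$, completing the equivalence.

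There is no genuine obstacle at the level of the corollary itself; it is essentially an assembly of results already in hand. All of the real content sits in Proposition~\ref{prop:lambda+1_converse}, which supplies the boundary case $t=\lambda+1$ (and in particular the case $(t,\lambda)=(2,1)$) through its double-counting argument, and in Theorem~\ref{thm:prev_main1}, which collapses the strict range $t\ge\lambda+2$ down to the complete digraph. The only point that warrants a moment's care is checking that $\olr{K}_{t+\lambda}$ satisfies the in-neighbor half of the two-way condition as well as the out-neighbor half; this is immediate from the symmetry of the complete digraph, so I expect the write-up to be short.
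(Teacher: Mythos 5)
Your proposal is correct and matches the paper's route: the paper likewise obtains the case $t=\lambda+1$ from Proposition~\ref{prop:lambda+1_converse} and the case $t\ge\lambda+2$ from the earlier completeness result (Theorem~\ref{thm:prev_main1}), with the reverse implication holding by definition. The only cosmetic difference is that the paper cites the prior papers for the $t\ge\lambda+2$ range rather than re-verifying that $\olr{K}_{t+\lambda}$ is two-way, a check you carry out explicitly and correctly.
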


The lower bound of the above corollary is tight.
In \cite{choi2024generalizedlikingdigraph}, there is an example that is a $(2,2)$-liking digraph of order $7$ which is not diregular.
It is easy to check that this digraph is not a two-way $(2,2)$-liking digraph.

Now, Theorem~\ref{thm:main2} and Corollary~\ref{cor:lambda+1_twoway} extend Theorem~\ref{thm:prev_main1}.

\begin{Thm}\label{thm:lambda+1_complete}
		If $t\ge \lambda+1$ and $t\ge 3$, then the complete digraph on $t+\lambda$ vertices is the only $(t,\lambda)$-liking digraph.
\end{Thm}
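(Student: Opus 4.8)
The plan is to derive this statement directly from the two results already in hand, namely Corollary~\ref{cor:lambda+1_twoway} and Theorem~\ref{thm:main2}. The two hypotheses $t \ge \lambda+1$ and $t \ge 3$ are imposed precisely so that these two results can be invoked in tandem, and no new combinatorial work is needed beyond stitching them together.

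First I would take an arbitrary $(t,\lambda)$-liking digraph $D$ with $t \ge \lambda+1$ and $t \ge 3$. Because $t \ge \lambda+1$, Corollary~\ref{cor:lambda+1_twoway} upgrades $D$ to a two-way $(t,\lambda)$-liking digraph. Then, because $t \ge 3$, Theorem~\ref{thm:main2} forces $D$ to be the complete digraph $\overleftrightarrow{K}_{t+\lambda}$. This shows $\overleftrightarrow{K}_{t+\lambda}$ is the only possibility. To close the ``only'' claim I would also check the trivial converse direction: in $\overleftrightarrow{K}_{t+\lambda}$ any set of $t$ vertices has as common out-neighbors exactly the remaining $(t+\lambda)-t=\lambda$ vertices, so $\overleftrightarrow{K}_{t+\lambda}$ is indeed a $(t,\lambda)$-liking digraph.

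Honestly, there is no genuine obstacle remaining at this stage, since all of the difficulty has been absorbed into the prerequisite results. Theorem~\ref{thm:main2} carries the weight through its case split into $\lambda \ge t$, $t \ge \lambda+2$, and $t = \lambda+1$ (the last handled by Proposition~\ref{prop:lambda+1_complete}), while Corollary~\ref{cor:lambda+1_twoway} rests on the double-counting degree argument of Proposition~\ref{prop:lambda+1_converse}. The only point deserving a moment's attention is confirming that the hypotheses $t \ge \lambda+1$ and $t \ge 3$ together cover exactly the ranges required to apply both results; they do, so the theorem follows in a single line of deduction.
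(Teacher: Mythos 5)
Your proposal is correct and matches the paper's own (implicit) derivation exactly: the paper states that Theorem~\ref{thm:lambda+1_complete} follows from Corollary~\ref{cor:lambda+1_twoway} (upgrading the digraph to a two-way $(t,\lambda)$-liking digraph since $t\ge\lambda+1$) combined with Theorem~\ref{thm:main2} (forcing completeness since $t\ge 3$). The added verification that $\overleftrightarrow{K}_{t+\lambda}$ is indeed a $(t,\lambda)$-liking digraph is a harmless extra.
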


\section{Acknowledgement}
This work was supported by Science Research Center Program through the National Research Foundation of Korea(NRF) grant funded by the Korean Government (MSIT)(NRF-2022R1A2C1009648).
%
%\bibliographystyle{abbrv}
%\bibliography{competition}

\end{document}